\DeclareMathOperator{\C}{\mathbb{C}}
\DeclareMathOperator{\D}{\mathbb{D}}
\DeclareMathOperator{\dist}{dist}
\DeclareMathOperator{\N}{\mathbb{N}}
\DeclareMathOperator{\Q}{\mathbb{Q}}
\DeclareMathOperator{\rel}{\textup{rel}}
\DeclareMathOperator{\R}{\mathbb{R}}
\DeclareMathOperator{\Z}{\mathbb{Z}}
\DeclareMathAlphabet{\mathpzc}{OT1}{pzc}{m}{it}
\newcommand{\bA}{\boldsymbol{A}}
\newcommand{\ba}{\boldsymbol{a}}
\newcommand{\be}{\boldsymbol{e}}
\newcommand{\bR}{\boldsymbol{R}}
\newcommand{\br}{\boldsymbol{r}}
\newcommand{\bx}{\boldsymbol{x}}
\newcommand{\bX}{\boldsymbol{X}}
\newcommand{\by}{\boldsymbol{y}}
\newcommand{\bz}{\boldsymbol{z}}
\newcommand{\Dc}{\mathscr{D}}
\newcommand{\Hc}{\mathcal{H}}
\newcommand{\Pc}{\mathcal{P}}
\theoremstyle{plain}
\newtheorem{thm}{Theorem}
\newtheorem{lem}[thm]{Lemma}
\newtheorem*{cor}{Corollary}
\newtheorem{prop}[thm]{Proposition}
\theoremstyle{definition}
\theoremstyle{remark}
\newtheorem*{rem}{Remark}
\newtheorem*{ack}{Acknowledgements}
\title{Hardy and Lieb-Thirring inequalities for anyons\thanks{This work was partially supported by the Danish Council for Independent Research.}}
\author{Douglas Lundholm and Jan Philip Solovej}
\date{\scriptsize{Department of Mathematical Sciences, University of Copenhagen\\ Universitetsparken 5, DK-2100 Copenhagen \O, Denmark}}
\begin{document}

\maketitle

\begin{abstract}
	We consider the many-particle quantum mechanics of \emph{anyons},
	i.e. identical particles in two space dimensions 
	with a continuous statistics parameter $\alpha \in [0,1]$ 
	ranging from bosons ($\alpha=0$) to fermions ($\alpha=1$).
	We prove a (magnetic) Hardy inequality for anyons,
	which in the case that $\alpha$ is an odd numerator fraction 
	implies a local exclusion principle for the kinetic energy of
	such anyons.
	From this result, and motivated by Dyson and Lenard's 
	original approach to the stability of fermionic matter 
	in three dimensions, we prove
	a Lieb-Thirring inequality for these types of anyons.
\end{abstract}

\setlength\arraycolsep{2pt}
\def\arraystretch{1.2}

\section{Introduction}

	The concept of identical particles and associated particle statistics 
	lies at the foundations of quantum mechanics.
	It arises as a consequence of the non-observability of particle interchange
	and the fact that states in quantum mechanics are represented by rays
	in a complex Hilbert space, i.e. only determined up to a complex phase.
	A quantum mechanical state describing $N$ distinguishable particles\footnote{We 
	will for simplicity restrict to \emph{scalar} non-relativistic particles,
	i.e. point particles without internal symmetries and spin.} 
	moving in $\R^d$ is represented by an \emph{$N$-particle wave function},
	i.e. a square-integrable complex-valued function 
	$u \in L^2(\R^{dN})$ defined on $N$ copies of $\R^d$,
	or equivalently, by an element of the tensor product space
	$\bigotimes^N L^2(\R^d)$ 
	derived from the one-particle Hilbert space $L^2(\R^d)$.
	Upon restricting to \emph{identical} particles, 
	the freedom of choice of particle statistics
	stems from the fact that only the amplitude $|u(x)|$ of the wave function
	--- describing (the square root of) 
	the probability density for measuring the
	specific configuration 
	$x = (\bx_1,\bx_2,\ldots,\bx_N)$ of particle positions $\bx_j \in \R^d$
	--- is observable, but not the exact phase $u(x)/|u(x)|$.
	Hence, since there should be no observable difference between 
	the particle configuration 
	$x = (\ldots,\bx_j,\ldots,\bx_k,\ldots)$
	and the one
	$x' = (\ldots,\bx_k,\ldots,\bx_j,\ldots)$,
	with particles $j$ and $k$ interchanged, 
	the amplitudes must be the same, but the phase may differ, 
	as expressed by
	\begin{equation} \label{particle-gluing-condition}
		u(x') = e^{i\alpha\pi} u(x),
	\end{equation}
	with $\alpha \in [0,2)$.
	In three and higher dimensions one finds that the only two 
	possibilites for such a 
	phase are $e^{i\alpha\pi} = \pm 1$, 
	corresponding to \emph{bosons} (such as photons) with the plus sign,
	and \emph{fermions} (such as electrons) with the minus sign.
	It is in fact sufficient to consider the permutation group $S_N$ 
	acting on the $N$-particle Hilbert space $L^2(\R^{dN})$, 
	and the wave functions describing identical bosons respectively fermions
	are then given by the completely symmetric resp. antisymmetric 
	$N$-particle wave functions,
	which in the latter case can be 
	represented by 
	the Hilbert space $\bigwedge^N L^2(\R^d)$.
	
	However, in two dimensions the statistics parameter $\alpha$ 
	can be taken to be \emph{any} real number in the interval $[0,2)$
	(or $(-1,1]$, by periodicity).
	Again, bosons correspond to $\alpha=0$ and fermions to $\alpha=1$,
	while for a general choice of $\alpha$ the corresponding particles are
	simply called \emph{anyons} 
	(or, historically, just 
	``particles obeying \emph{intermediate} or \emph{fractional statistics}''
	\cite{Streater-Wilde:70,Leinaas-Myrheim:77,Goldin-Menikoff-Sharp:81,Wilczek:82}).
	This discrepancy between two and higher spatial dimensions is directly
	related to the fact that a punctured plane is not simply connected,
	while $\R^d \setminus \{0\}$ \emph{is}, for $d \ge 3$.
	Hence, for particles confined to the plane one has to consider \emph{continuous} 
	interchanges of particles, forming a loop in the configuration space
	(together with the possibility of enclosing other particles in that loop),
	and the permutation group symmetry is replaced by the \emph{braid group} $B_N$,
	whose one-dimensional unitary representations determine 
	the choice of statistics for identical anyons through the phase $e^{i\alpha\pi}$.
	For a convenient and rigorous treatment of anyons, 
	one can model them as identical bosons or fermions in the plane, 
	but with a magnetic interaction of Aharonov-Bohm type between each
	pair of particles, giving rise to the correct statistics phase
	as the particles encircle each other.
	The quantum mechanical momenta of the particles, 
	which for bosons and fermions are simply given by the gradients 
	$-i\nabla_j u$ w.r.t. the particle positions $\bx_j$,
	will then be replaced by covariant (magnetic) derivatives 
	$D_j u$ (see \eqref{covariant_derivative} below).
	
	Fermions in any dimension are special, 
	since they satisfy the so-called \emph{Pauli exclusion principle}.
	Namely, because of the antisymmetry of the wave function,
	no two particles can occupy the exact same state,
	expressed simply with the help of the wedge product as 
	$u_0 \wedge u_0 = 0$ for any one-particle state $u_0 \in L^2(\R^d)$.
	An important and non-trivial consequence of this is
	the celebrated \emph{Lieb-Thirring inequality}, 
	which given a scalar potential $V$ on $\R^d$
	can be summarized as 
	\begin{multline} \label{Lieb-Thirring}
		\sum_{j=1}^N \int_{\R^{dN}} \left( |\nabla_j u|^2 + V(\bx_j)|u|^2 \right) dx \\
		\ \ge \ -\sum_{k=0}^{N-1} |\lambda_k(h)|
		\ \ge \ -C^{\textup{LT}}_d \int_{\R^d} |V_-(\bx)|^{1 + \frac{d}{2}} \,d\bx,
	\end{multline}
	where $\lambda_k(h)$ denote the negative eigenvalues
	(ordered with decreasing magnitude)
	of the one-particle Schr\"odinger operator
	$h := -\Delta_{\R^d} + V(\bx)$ acting in $L^2(\R^d)$.
	The first inequality just expresses the fact that, because of the
	Pauli principle for fermions, the lowest possible energy 
	(l.h.s. of \eqref{Lieb-Thirring}) is obtained
	when the particles assume the 
	states corresponding to the
	lowest $N$ eigenvalues $\lambda_k(h)$,
	i.e. when $u = \bigwedge_{k=0}^{N-1} u_k$ 
	is an antisymmetrized product of those one-particle eigenfunctions $u_k(h)$. 
	The second inequality holds uniformly in $N$ 
	and concerns the trace over the negative 
	spectrum of the one-particle operator $h$.
	Ever since the first proof of \eqref{Lieb-Thirring} 
	in 1975 by Lieb and Thirring \cite{Lieb-Thirring:75},
	who used this result for a simplified proof of 
	stability of fermionic matter (see also \cite{Lieb-Seiringer:10}), 
	there has been a lot of activity in the mathematical community 
	aiming to generalize
	this type of spectral estimate for one-particle operators
	in various directions.
	Furthermore, the Lieb-Thirring inequality \eqref{Lieb-Thirring} 
	(disregarding the intermediate sum over eigenvalues) 
	is equivalent to the \emph{kinetic energy inequality}
	\begin{equation} \label{kinetic-energy-inequality}
		T := \sum_{j=1}^N \int_{\R^{dN}} |\nabla_j u|^2 \,dx
		\ \ge \ C^{\textup{K}}_d \int_{\R^d} \rho(\bx)^{1 + \frac{2}{d}} \,d\bx,
	\end{equation}
	which can be interpreted as 
	a strong form of the uncertainty principle for fer\-mi\-ons,
	because of the way it bounds the total kinetic energy $T$
	in terms of the one-particle density 
	\begin{equation} \label{one-particle-density}
		\rho(\bx) := \sum_{j=1}^N \int_{\R^{d(N-1)}} 
		|u(\bx_1,\ldots,\bx_j = \bx,\ldots,\bx_N)|^2 
		\prod_{k \neq j} d\bx_k
	\end{equation}
	of the wave function 
	(always assumed to be normalized to $\int_{\R^{dN}} |u|^2 \,dx = 1$).
	
	Bosons, on the other hand, do not satisfy any exclusion principle.
	They can all be put in the same state,
	e.g. $u = u_0 \otimes \ldots \otimes u_0$,
	and hence cannot be expected to satisfy the inequalities
	\eqref{Lieb-Thirring} or \eqref{kinetic-energy-inequality}.
	In fact, the best we can do is to treat them as $N$ 
	copies of a single 
	particle satisfying \eqref{Lieb-Thirring},
	and hence the above inequalities hold only in the weaker form
	$$
		\sum_{j=1}^N \int_{\R^{dN}} \left( |\nabla_j u|^2 + V(\bx_j)|u|^2 \right) \,dx
		\ \ge \ -N \,C^{\textup{LT}}_d \int_{\R^d} |V_-(\bx)|^{1 + \frac{d}{2}} \,d\bx,
	$$
	resp. (cp. Appendix B)
	$$
		\sum_{j=1}^N \int_{\R^{dN}} |\nabla_j u|^2 \,dx
		\ \ge \ \frac{C^{\textup{K}}_d}{N^{2/d}} \int_{\R^d} \rho(\bx)^{1 + \frac{2}{d}} \,d\bx,
	$$
	which now only encodes the uncertainty principle, 
	without the extra gain in \eqref{kinetic-energy-inequality}
	due to statistics.
	These bosonic inequalities become trivial as $N \to \infty$.
	
	In contrast, not much has been known about the 
	spectral and statistical properties
	of many anyons for $0 < |\alpha| < 1$. 
	Not even the ground state energy of an otherwise non-interacting 
	gas of anyons has been rigorously estimated
	(note that this is trivial in the case of bosons, 
	and a simple exercise in the case of fermions).
	Even though we live in a three-dimensional world, there are situations
	where anyons are believed to describe,
	in the form of quasi-particles,
	the excitations in effective two-dimensional highly correlated systems 
	(see e.g. \cite{Khare:05}).
	Analyzing the thermodynamic properties of a gas of anyons may 
	therefore be of importance in understanding such systems. 
	Although the free anyon gas, which we consider here, is much simpler 
	than the effective systems mentioned above, we consider investigating 
	its thermodynamic properties a first step in such an analysis.

	The problem 
	of understanding the ground state properties of the anyon gas 
	has been attacked by many authors 
	through various approximations (see the references in the books and reviews
	\cite{Froehlich:90,Khare:05,Lerda:92,Myrheim:99,Wilczek:90})\footnote{
	We should also point out \cite{Baker_et_al:93} where a certain
	class of anyons with a strong hard-core repulsion is considered,
	as well as \cite{DFT} where the formalism is discussed.}.
	The main difficulty 
	lies in the fact that many-anyon wave functions
	cannot be simply related to one-particle wave functions in the same way
	as for bosons and fermions. 
	As will be seen explicitly below, the Hamiltonian operator 
	$H_0 = \sum_{j=1}^N D_j^2$
	describing the kinetic energy of $N$ free
	anyons is not just a free Laplacian acting on totally symmetric or antisymmetric
	wave functions, but involves long-range magnetic interactions between
	all the particles.
	In particular, we cannot reduce our study to the relatively simple case 
	of a one-particle Schr\"odinger operator $h$.
	The aim of the present paper is to address this gap in 
	knowledge concerning intermediate anyon statistics, 
	as well as the current lack of techniques to study 
	the spectral theory of such quantum mechanical systems.
	In particular, we develop a technique 
	for proving Lieb-Thirring inequalities for interacting systems,
	resulting in Theorem \ref{thm:LT-anyon-intro} below for anyons.

	The operator $D_j$ given explicitly in \eqref{covariant_derivative} 
	is singular when particle $j$ coincides with one of the other particles.
	The operator $H_0$ is therefore defined as the Friedrichs extension
	from smooth functions vanishing when particles coincide.
	This might seem to imply a hard-core condition on anyons,
	but we have recently shown in \cite{Lundholm-Solovej:extended}
	that it actually does not, since it also corresponds to the \emph{maximal}
	extension of the energy form
	$u \mapsto \int_{\R^{2N}} \sum_j |D_j u|^2 \,dx$.
	
	Our first main result concerns a magnetic many-particle Hardy inequality
	for $N$ anyons, which when considered on 
	the full two-dimensional plane $\R^2$
	reads:
	\begin{equation} \label{many-anyon-Hardy-simplified}
		\sum_{j=1}^N \int_{\R^{2N}} |D_j u|^2 \,dx
		\ \ge \ \frac{4 C_{\alpha,N}^2}{N} 
			\sum_{i<j} \int_{\R^{2N}} \frac{|u|^2}{|\bx_i - \bx_j|^2} \,dx,
	\end{equation}
	with the statistics-dependent constant
	\begin{equation} \label{statistics-constant}
		C_{\alpha,N} := \min_{p=0,1,\ldots,N-2} \ \min_{q \in \Z} |(2p+1)\alpha - 2q|.
	\end{equation}
	A stronger form of \eqref{many-anyon-Hardy-simplified},
	given as Theorem \ref{thm:many-anyon-Hardy} below, and
	valid for any convex subdomain $\Omega \subseteq \R^2$,
	is shown to produce a local form of 
	\emph{Pauli's exclusion principle for anyons}
	(given as Lemma \ref{lem:energy_in_ball} below),
	whose strength depends on the large-$N$ behavior of the constant 
	$C_{\alpha,N}$.
	Although $C_{\alpha,N}$ 
	is clearly non-zero for all $N$
	whenever $\alpha$ is irrational, we find that
	$\inf_{N \in \N} C_{\alpha,N} = 0$, unless 
	$\alpha = \frac{\mu}{\nu}$
	with $\mu$ and $\nu$ relatively prime integers and $\mu$ odd, 
	in which case we shall see that
	$\inf_{N \in \N} C_{\alpha,N} = \frac{1}{\nu}$.

	For such \emph{odd numerator fractions} $\alpha = \frac{\mu}{\nu}$, 
	the energy given by the local Pauli exclusion principle 
	for arbitrary numbers of particles
	is of a similar form as for fermions, 
	but with an extra factor $\frac{1}{\nu^2}$ 
	(depending wildly on the statistics parameter).
	Our second main result is that this local bound is sufficient to
	produce a Lieb-Thirring inequality for these types of anyons.
	In our approach we have been
	inspired by Dyson and Lenard's original
	proof of the stability of fermionic matter in
	three dimensions \cite{Dyson-Lenard:67,D,L}
	(from 1967, before the advent of the Lieb-Thirring inequality), 
	in which the only place where the Pauli
	principle came in was through such a local bound for the energy.
	
\begin{thm} \label{thm:LT-anyon-intro}
	For a normalized wave function $u$ of $N$ anyons on $\R^2$,
	i.e. a completely symmetric function $u \in L^2((\R^2)^N)$
	in the quadratic form domain of $H_0 = \sum_{j=1}^N D_j^2$,
	with odd-fractional statistics parameter $\alpha = \frac{\mu}{\nu}$
	(i.e. a reduced fraction with $\mu$ odd), 
	we have the kinetic energy inequality
	\begin{equation}
		T := \sum_{j=1}^N \int_{\R^{2N}} |D_j u|^2 \,dx
		\ \ge \ \frac{1}{\nu^2} \,C_{\textup{K}} \int_{\R^2} \rho(\bx)^2 \,d\bx,
	\end{equation}
	where $\rho$ is the corresponding one-particle density \eqref{one-particle-density},
	and hence the Lieb-Thirring inequality
	\begin{equation}
		\sum_{j=1}^N \int_{\R^{2N}} \left( |D_j u|^2 + V(\bx_j)|u|^2 \right) \,dx
		\ \ge \ -\nu^2 \,C_{\textup{LT}} \int_{\R^2} |V_-(\bx)|^2 \,d\bx,
	\end{equation}
	for any real-valued potential $V$ on $\R^2$.
	Here $C_{\textup{K}}$ and $C_{\textup{LT}}$ 
	are universal positive constants 
	that can be given explicitly.
\end{thm}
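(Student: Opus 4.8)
The plan is to deduce the Lieb--Thirring inequality from the kinetic energy inequality by a standard duality argument, so the real content is the bound $T \ge \frac{C_{\textup{K}}}{\nu^2}\int_{\R^2}\rho^2$. For this I would follow the Dyson--Lenard philosophy advertised above: feed the local exclusion principle (Lemma \ref{lem:energy_in_ball}, itself a consequence of the many-anyon Hardy inequality, Theorem \ref{thm:many-anyon-Hardy}) into a local version of the uncertainty principle. Two ingredients are needed. First, a \emph{statistics-free} lower bound obtained by diamagnetism, $\sum_j|D_j u|^2 \ge \sum_j|\nabla_j|u||^2$, followed by the Hoffmann--Ostenhof inequality $\sum_j\int|\nabla_j|u||^2 \ge \int_{\R^2}|\nabla\sqrt\rho|^2$, which reduces the ``uncertainty'' part of the energy to a one-particle quantity. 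Second, the local exclusion principle, which I read as: for a square $Q$ of side $\ell$, the energy localized to $Q$ is at least $c\,\nu^{-2}\ell^{-2}(n_Q-1)_+$, where $n_Q=\int_Q\rho$ is the expected particle number in $Q$; this is where the statistics enter and where the factor $\nu^{-2}$ is produced.

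The combination proceeds box by box. On a square $Q$ of side $\ell$ the two-dimensional Gagliardo--Nirenberg inequality with a Poincar\'e correction gives a \emph{local uncertainty principle} of the form
\[
  \int_Q|\nabla\sqrt\rho|^2 \ \ge\ \frac{c_1}{n_Q}\int_Q\rho^2 \ -\ \frac{c_2\,n_Q}{\ell^2}.
\]
I would now split into two regimes according to whether the local density is concentrated or spread. If $\int_Q\rho^2 \ge \Lambda\,n_Q^2/\ell^2$ (concentration), the negative error above is dominated by the positive term and the gradient term alone yields $\int_Q|\nabla\sqrt\rho|^2 \gtrsim n_Q^{-1}\int_Q\rho^2$, which already beats the target $\nu^{-2}n_Q^{-1}\int_Q\rho^2$ since $\nu\ge 1$. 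If instead $\int_Q\rho^2 < \Lambda\,n_Q^2/\ell^2$ (spreading), I discard the gradient and use the exclusion term: provided $n_Q$ exceeds a \emph{universal} threshold so that $(n_Q-1)_+\gtrsim n_Q$, one gets $c\,\nu^{-2}\ell^{-2}(n_Q-1)_+ \gtrsim \nu^{-2}\ell^{-2}n_Q \gtrsim \nu^{-2}n_Q^{-1}\int_Q\rho^2$. Thus on any box carrying a bounded, order-one amount of density both regimes produce $T_Q \gtrsim \nu^{-2}\int_Q\rho^2$, and summing over a tiling yields the claim. To arrange that every box carries a controlled amount of density I would use a stopping-time (dyadic) decomposition of $\R^2$: subdivide any box whose mass exceeds a fixed constant, stopping when the mass drops below it.

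The step I expect to be the main obstacle is the localization itself together with the treatment of low-mass boxes. Because $D_j$ couples particle $j$ to \emph{all} the others through the long-range Aharonov--Bohm vector potential, the many-body energy cannot be naively restricted to a subregion of configuration space; this is precisely why one must route the statistical content through the Hardy-derived local exclusion principle rather than through a direct splitting of $\sum_j|D_j u|^2$. The second difficulty is that the dyadic decomposition unavoidably produces final boxes whose mass has fallen below the exclusion threshold, on which the spreading regime gives no useful exclusion bound; controlling the contribution of $\int_Q\rho^2$ from these boxes --- by playing them off against their parents across scales, or by absorbing them into the global gradient term --- is the delicate part, and the bookkeeping of constants here is what ultimately fixes the admissible threshold and the value of $C_{\textup{K}}$. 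Once $T \ge C_{\textup{K}}\nu^{-2}\int\rho^2$ is established, the Lieb--Thirring inequality follows by writing the total energy as $T+\int V\rho$, bounding $T$ from below, and minimizing $C_{\textup{K}}\nu^{-2}\rho^2+V\rho$ pointwise in $\rho\ge 0$, which gives the stated bound with $C_{\textup{LT}}=1/(4C_{\textup{K}})$.
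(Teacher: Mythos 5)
Your proposal follows essentially the same route as the paper: the Lieb--Thirring bound is reduced by duality to the kinetic energy inequality, which is in turn proved by combining the Hardy-derived local exclusion principle with a local uncertainty principle on a stopping-time dyadic decomposition of the plane, the low-mass boxes being controlled by playing them off against the surplus term on their mass-carrying ancestors exactly as in the paper's tree argument. The only inessential difference is that you derive the local uncertainty principle from the diamagnetic and Hoffmann--Ostenhof inequalities together with Gagliardo--Nirenberg applied to $\sqrt{\rho}$, whereas the paper uses a Rumin-type Neumann Lieb--Thirring bound applied to $|u|$ --- a variant the paper itself notes (in the remark after Lemma \ref{lem:box-LT-anyons}) is equally viable.
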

	
	In particular, this implies that the total kinetic energy $T$ 
	per unit area
	for a non-interacting (apart from the statistical interaction) 
	gas of anyons with odd-fractional statistics $\alpha = \frac{\mu}{\nu}$, 
	confined to an area $A$,
	is bounded below by
	\begin{equation} \label{anyon-gas}
		\frac{T}{A} \ \ge \ C_{\textup{K}} \frac{\bar{\rho}^2}{\nu^2},
	\end{equation}
	where $\bar{\rho} := N/A$ is the average density of the gas.
	Our assumption on alpha being an odd numerator rational may 
	look peculiar, and could of course be an artifact of our proof, 
	however it certainly raises the very interesting question whether 
	it is necessary or not.
	Other implications of this Lieb-Thirring inequality,
	such as for interacting anyon gases,
	as well as this question of sharpness 
	concerning even-fractional and irrational statistics 
	and its physical interpretation,
	will be discussed elsewhere
	\cite{Lundholm-Solovej:exclusion}.
	We have also extended
	some of our techniques to the case of
	identical particles in one dimension
	\cite{Lundholm-Solovej:extended}.

	The structure of this paper is as follows.
	In Section 2 we fix the notation and briefly recall the 
	general theory of particle statistics. 
	Our fundamental many-particle Hardy inequality for anyons is proven in Section 3
	based on a pairwise relative parameterization of the configuration 
	space, combined with a local magnetic Hardy inequality which 
	takes into account
	the underlying symmetry between the particles.
	Section 4 concerns the local gain in energy 
	following from these Hardy inequalities,
	which we refer to as 
	a local Pauli exclusion principle for anyons
	due to its direct similarity with the 
	corresponding local gain in energy for fermions.
	In Section 5 we use this local gain to prove 
	the Lieb-Thirring inequality for anyons.
	In the appendices we have placed some suggestions for improvements
	of the local energy bound, 
	as well as a proof of a Lieb-Thirring inequality
	on cubes with Neumann boundary conditions.
	
	We emphasize that, although a Lieb-Thirring-type inequality for anyons
	could perhaps have been anticipated based on physical grounds
	(at least for some values of the statistics parameter $\alpha$),
	this is from a purely mathematical perspective a highly non-trivial
	extension of the usual Lieb-Thirring inequality \eqref{Lieb-Thirring}
	since the relevant operator
	$$
		H = \sum_{j=1}^N \left( 
			D_j(x)^2
			+ V(\bx_j) \right),
	$$
	with $D_j(x)$ being the differential operators given explicitly 
	in \eqref{covariant_derivative} below,
	is now a strongly interacting magnetic many-particle Hamiltonian.
	Our method for proving Lieb-Thirring inequalities for interacting
	many-particle Hamiltonians
	was recently used in \cite{Frank-Seiringer}
	for a model with point interactions in three dimensions.

\vspace{10pt}

\begin{ack}
	D.L. would like to thank Oscar Andersson Forsman for discussions
	related to Appendix A.
\end{ack}

\section{Preliminaries}
	\label{sec:preliminaries}

\subsection{Particle statistics in two dimensions}

	In this subsection, which is \emph{not} a prerequisite for understanding 
	the rest of the paper,
	we give a very brief recap of 
	the general theory of particle statistics
	in $d \ge 2$ dimensions.
	For more details we refer to
	the basic reference \cite{Leinaas-Myrheim:77},
	the review articles \cite{Froehlich:90,Myrheim:99},
	and the books \cite{Khare:05,Lerda:92,Wilczek:90} on anyons.
	
	The classical configuration space of $N$ identical particles in $\R^d$
	is formally given by
	$$
		X_d^N := \left( \R^{dN} \setminus \D \right) \Big/ {S_N},
	$$
	where we have excluded all coincidences of the particles,
	i.e. the diagonals
	$$
		\D := \{ x \in \R^{dN} : \bx_j = \bx_k \ \textrm{for some $j \neq k$} \},
	$$
	and the symmetric group $S_N$ acts on 
	the $N$ copies of $\R^d$ in the obvious way.
	(The center-of-mass coordinate $\bX := \frac{1}{N}\sum_j \bx_j$ 
	can be trivially factored out, $X_d^N \cong \R^d \times X_{d,\rel}^N$,
	leaving the \emph{relative} configuration space 
	$X_{d,\rel}^N = \left( \{ x \in \R^{dN} : \bX=0 \} \setminus \D \right) \big/ {S_N}$.)
	For $d \ge 3$ the fundamental group of $X_d^N$
	is $\pi_1(X_d^N) = S_N$, whereas for $d=2$ it is 
	the braid group on $N$ strands, $\pi_1(X_2^N) = B_N$.
	Wave functions of $N$ identical particles are 
	defined as square-integrable complex-valued functions 
	on $X_d^N$ with appropriate gluing conditions
	(recall the physical requirement \eqref{particle-gluing-condition}).
	Hence, these can be viewed as sections of 
	a complex line bundle over $X_d^N$.
	There are natural flat connections on such line bundles,
	taking the trivial connection locally on $\R^{dN}$
	(note that the parallel transports could be \emph{globally}
	non-trivial as there are non-trivial transition functions 
	between local regions),
	and every such connection defines a unitary one-dimensional 
	representation of the fundamental group $\pi_1(X_d^N)$.
	For $d \ge 3$ there are only two such representations,
	the trivial one corresponding to bosons,
	and the sign on $S_N$ corresponding to fermions.
	For $d=2$ the unitary one-dimensional representations of the
	braid group are parameterized by a real number $\alpha \in [0,2)$,
	where every generator in $B_N$ corresponding to a 
	counter-clockwise interchange of two neighbouring strands is
	represented by the phase $e^{i\alpha\pi}$.
	In particular, this will imply that
	\begin{equation} \label{statistics-definition}
		(\Pc_{(p)}u)(x) = e^{i(2p+1)\alpha\pi} u(x),
	\end{equation}
	where $\Pc_{(p)}$ denotes the action of parallel transport along 
	a closed loop in $X_2^N$ corresponding to
	continuous counter-clock\-wise interchange 
	of two particles
	$\bx_j$ and $\bx_k$, with the interchange loop
	enclosing precisely 
	$0 \le p \le N-2$
	\emph{other} particles $\bx_{i_1},\ldots,\bx_{i_p}$.
	On the other hand, if a single particle $\bx_j$ is taken along
	a simple loop which encloses $p$ other particles, 
	then a phase factor $e^{i2p\alpha\pi}$ will be picked up.
	\begin{figure}[t]
		\centering
		\psfrag{p}{$p$}
		\psfrag{T_phase1}{$e^{i2p\alpha\pi}$}
		\psfrag{T_phase2}{$e^{i(2p+1)\alpha\pi}$}
		\includegraphics{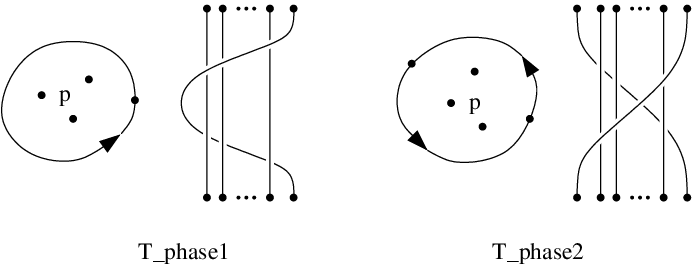}
		\caption{One- resp. two-particle interchange loops with
			corresponding braid diagrams 
			(where we can think of time as running upwards)
			and phases.}
		\label{fig:loops}
	\end{figure}
	See Figure \ref{fig:loops} for these examples,
	and for a glimpse of how the exact phases arise through 
	braid diagrams, with each elementary counter-clockwise braid contributing 
	a phase $e^{i\alpha\pi}$.
	If $\alpha = 0$ or $\alpha = 1$ the representations
	depend only on the permutations of the particles and not on the
	braid, and we are back to the case of bosons and fermions.

	We shall denote the Hilbert space of $N$-particle wave functions 
	with a given statistics parameter $\alpha$ by $\Hc_\alpha^N$.
	In the case of bosons or fermions we may identify such sections
	of line bundles with complex-valued functions on $\R^{dN}$ 
	that are either totally symmetric or totally antisymmetric, 
	and hence we obtain 
	the usual spaces of $N$-boson, resp. $N$-fermion wave functions.

	In general, the line bundles corresponding to different $\alpha$
	are topologically equivalent,
	but not geometrically equivalent. The map
	(singular gauge transformation)
	\begin{equation} \label{gauge-transformation-u}
		u(x) \quad \mapsto \quad
		\prod_{k < l} e^{i(\alpha-\alpha_0)\phi_{kl}} u(x),
		\qquad \phi_{kl} := \arg \frac{\bx_k - \bx_l}{|\bx_k - \bx_l|},
	\end{equation}
	(where we pick an arbitrary real axis to identify $\R^2$ with $\C$)
	maps the line bundle corresponding to $\alpha_0$
	to the line bundle corresponding to $\alpha$.
	The natural flat connection on the $\alpha_0$-bundle
	will then not be mapped to the natural flat connection on the 
	$\alpha$-bundle, but gives rise to the non-trivial gauge potential
	\begin{equation} \label{gauge-transformation-A}
		\bA_j := -i\left( \prod_{k<l} e^{i(\alpha - \alpha_0)\phi_{kl}} \right)^{-1} \!
			\nabla_{\bx_j} \left( \prod_{k<l} e^{i(\alpha - \alpha_0)\phi_{kl}} \right).
	\end{equation}
	We can then think of the $\alpha_0$-bundle as providing 
	a reference statistic,
	and we will choose $\alpha_0$ to be either $0$ or $1$
	in order to model general 
	statistics $\alpha$ in terms of
	bosonic or fermionic wave functions.
	In this way we can model $N$-anyon wave functions as either
	totally symmetric or antisymmetric $N$-particle wave functions
	$u \in \Hc_{\alpha_0}^N \subseteq L^2(\R^{2N})$ 
	with covariant derivatives
	$$
		D_j = -i\nabla_j + \bA_j.
	$$
	The advantage of taking this viewpoint is that 
	we then can work exclusively on the redundant but uncomplicated 
	configuration space $\R^{2N}$.
	The disadvantage, of course, is that we have to deal with a long-range
	magnetic interaction potential $\bA_j$.
	In the physics literature the former viewpoint is often referred to as
	the \emph{anyon gauge}, while the latter is called the \emph{magnetic gauge}.
	Although we will mostly stick to the magnetic gauge 
	(with $\alpha_0 := 0$), 
	it is for the purpose of intuition very useful
	to have both pictures in mind.

\subsection{Notation}
	
	In order to make precise our notation, we will 
	denote by $\Dc^N_{\alpha_0,\alpha}$ the space of 
	\emph{finite kinetic energy}  
	wave functions of $N$
	anyons with reference statistic $\alpha_0 \in \{0,1\}$
	and actual statistics parameter $\alpha$, 
	i.e. the totally symmetric/antisymmetric
	functions $u \in \Hc^N_{\alpha_0} \subseteq L^2(\R^{2N})$,
	where each particle is
	interacting with all the other through 
	Aharonov-Bohm magnetic potentials of strength 
	$\alpha - \alpha_0$,
	in the following precise sense.
	The covariant derivative acting w.r.t. particle $\bx_j$ 
	(following from \eqref{gauge-transformation-u}--\eqref{gauge-transformation-A}) 
	is 
	\begin{equation} \label{covariant_derivative}
		D_j := -i\nabla_j + \bA_j(\bx_j) 
		:= -i\nabla_{\bx_j} + (\alpha - \alpha_0) \sum_{k \neq j} (\bx_j - \bx_k)^{-1}I, 
	\end{equation}
	where $\ba^{-1} := \ba/|\ba|^2$
	and $\ba \mapsto \ba I$ denotes
	counter-clockwise rotation of the vector $\ba \in \R^2$
	by an angle $\pi/2$.
	We consider the semi-bounded quadratic form
	\begin{equation} \label{quadratic-form}
		u \ \mapsto \ 
		q(u) := \sum_{j=1}^N \int_{\R^{2N}} |D_j u|^2 \,dx
		= \sum_{j=1}^N \int_{\R^{2N}} \bar{u} \, D_j^2 u \,dx,
	\end{equation}
	defined initially on 
	$u \in C_0^\infty(\R^{2N} \setminus \D) \cap \Hc^N_{\alpha_0}$,
	i.e. the smooth totally symmetric/antisymmetric 
	and square-integrable functions on $\R^{2N}$
	with support away from diagonals.
	The space of finite kinetic energy 
	$N$-anyon wave functions $\Dc^N_{\alpha_0,\alpha}$ is then
	defined as the domain of the closure of this quadratic form $q$
	on $\Hc^N_{\alpha_0}$.
	There is also an associated smaller space 
	$\tilde{\Dc}^N_{\alpha_0,\alpha}$
	defined as the domain of the Friedrichs extension,
	i.e. of the self-adjoint operator $H_0 = \sum_j D_j^2$ 
	on $\Hc^N_{\alpha_0} \subseteq L^2(\R^{2N})$
	associated to the closure of the quadratic form \eqref{quadratic-form}.
	Although this definition might seem to imply a (mild) 
	\emph{hard-core condition} on anyons
	(cp. e.g. \cite{Baker_et_al:93,Loss-Fu:91}),
	we have shown in \cite{Lundholm-Solovej:extended} 
	that it actually poses no unnecessary restriction
	on wave functions since it agrees with the \emph{maximal} domain of 
	the quadratic form $q$ in \eqref{quadratic-form}. 
	Note e.g. that for $\alpha_0 = \alpha = 0$,
	\begin{equation} \label{domain_example}
		\tilde{\Dc}^N_{0,0} = H^2(\R^{2N}) \cap \Hc^N_0
		\ \subseteq \ H^1(\R^{2N}) \cap \Hc^N_0 = \Dc^N_{0,0},
	\end{equation}
	where $H^k$ denote the Sobolev spaces of 
	$k$ partial derivatives in $L^2$.
	In our proofs we will always use the denseness of the 
	smooth functions with compact support in these spaces,
	and pick such representatives without 
	taking explicit limits.
	
	In the following,
	open resp. closed balls of radius $r$ at a point $x$ will be denoted 
	$B_r(x)$, resp. $\bar{B}_r(x)$,
	and the characteristic function of a set $A$ is denoted $\chi_A$.
	Given a real-valued function or expression $f$, we define 
	the non-negative quantities
	$f_\pm := \max \{0, \pm f\}$.

\section{Hardy inequalities for anyons}

	We can mention \cite{H-O2-Laptev-Tidblom:08},
	where a many-particle Hardy inequality has been derived for anyons
	(see Theorem 2.7 in \cite{H-O2-Laptev-Tidblom:08}),
	but which is unfortunately not sufficient for our purposes\footnote{
	Note, for instance, that the corresponding Hardy constant in 
	\cite{H-O2-Laptev-Tidblom:08},
	$$
		D_{N,\alpha} = \min_{l=1,2,\ldots,N-1} \left( \frac{\min_{k \in \Z} |l\alpha - k|}{l} \right)^2,
	$$ 
	is zero for $\alpha = 1$
	(and any $\alpha \in \Q$ for $N$ large enough), 
	and that in any case $D_{N,\alpha} \lesssim N^{-2}$.
	Hardy inequalitites 
	for interactions of anyonic type have also
	been considered in \cite{Melgaard-Ouhabaz-Rozenblum:04},
	but these are of single-particle type and hence also
	do not take the underlying symmetry between particles into account,
	as well as have an unclear dependence of the corresponding constants 
	on the positions of the Aharonov-Bohm fluxes.}.
	The weakness stems from the fact that, so far,
	only single-particle movements have been taken into account,
	which only captures some of the symmetries involved
	(cp. Figure \ref{fig:loops}).
	In order to arrive at something non-trivial,
	at a minimum for the special case $\alpha=1$ of fermions, 
	we need to consider a \emph{relative} Hardy inequality 
	(cp. e.g. Lemma 4.6 in \cite{H-O2-Laptev-Tidblom:08}
	which one could view as a relative Hardy inequality for 
	a pair of fermions in any dimension).
	Also
	crucial for our approach is the following extension of a class of
	well-known two-dimensional magnetic Hardy inequalities 
	(see \cite{Laptev-Weidl:99,Balinsky:03,Melgaard-Ouhabaz-Rozenblum:04}),
	where we
	take the underlying symmetry of the wave function 
	and gauge potential into account.
	
\begin{lem}[Magnetic Hardy inequality with symmetry] 
	\label{lem:magnetic-Hardy}
	Let $\Omega = B_{R_2}(0) \setminus \bar{B}_{R_1}(0)$,
	$R_2 > R_1 \ge 0$,
	be an annular domain in $\R^2$,
	and let there be a magnetic flux $\Phi$ inside 
	$\bar{B}_{R_1}(0)$, determined on $\Omega$
	by a vector potential $\ba: \Omega \to \R^2$, 
	s.t. $\nabla \wedge \ba = 0$ on $\Omega$ and
	$\int_\Gamma \ba \cdot d\br = \Phi$ for any simple loop $\Gamma$
	in $\Omega$ enclosing $\bar{B}_{R_1}(0)$.
	Furthermore, assume that $\ba$ is antipodal-\emph{antisymmetric}, 
	i.e. $\ba(-\br) = -\ba(\br)$ for all $\br \in \Omega$,
	and let $v \in C^\infty(\Omega)$ be a function on $\Omega$
	with antipodal \emph{symmetry}, 
	$v(-\br) = v(\br)$ for all $\br \in \Omega$.
	Then
	\begin{equation} \label{magnetic-Hardy-boson}
		\int_{\Omega} |D_{\br}v|^2 \,d\br
		\ \ge \ \min_{k \in \Z} \left| \frac{\Phi}{2\pi} - 2k \right|^2 
			\int_{\Omega} \frac{|v|^2}{|\br|^2} \,d\br,
	\end{equation}
	where $D_{\br} := -i\nabla_{\br} + \ba(\br)$.
	
	Alternatively, if $v$ is antipodal-\emph{anti}symmetric,
	$v(-\br) = -v(\br)$ for all $\br \in \Omega$, then
	\begin{equation} \label{magnetic-Hardy-fermion}
		\int_{\Omega} |D_{\br}v|^2 \,d\br
		\ \ge \ \min_{k \in \Z} \left| \frac{\Phi}{2\pi} - (2k+1) \right|^2 
			\int_{\Omega} \frac{|v|^2}{|\br|^2} \,d\br.
	\end{equation}
\end{lem}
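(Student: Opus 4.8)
The plan is to reduce to the canonical Aharonov--Bohm potential by a gauge transformation and then to exploit the antipodal parity of $v$ through an angular Fourier decomposition. Let $\ba_0(\br) := \frac{\Phi}{2\pi}\br^{-1}I$ (where $\br^{-1} = \br/|\br|^2$) be the standard flux-$\Phi$ potential, which in polar coordinates $\br = r(\cos\theta,\sin\theta)$ is $\frac{\Phi}{2\pi r}\hat\theta$ and has the same circulation $\Phi$ around $\bar B_{R_1}(0)$. Since $\nabla \wedge \ba = 0$ on $\Omega$ and $\int_\Gamma \ba\cdot d\br = \Phi$, the difference $\ba - \ba_0$ is both curl-free and of vanishing circulation, hence exact on $\Omega$: $\ba - \ba_0 = \nabla\chi$ for a single-valued $\chi \in C^\infty(\Omega)$. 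Setting $D_\br^0 := -i\nabla_\br + \ba_0$ and $\tilde v := e^{i\chi}v$, a direct computation gives $D_\br^0\tilde v = e^{i\chi}D_\br v$, so that $|D_\br^0\tilde v| = |D_\br v|$ and $|\tilde v| = |v|$ pointwise; it therefore suffices to prove the inequality for $\ba = \ba_0$.

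Before doing so I would check that this gauge step preserves the antipodal symmetry class of $v$. Both $\ba$ (by hypothesis) and $\ba_0$ (manifestly, since $\hat\theta \mapsto -\hat\theta$ under $\br\mapsto-\br$) are antipodal-antisymmetric, so $\nabla\chi$ is antipodal-antisymmetric. A short argument with the chain rule then shows $\chi(-\br) - \chi(\br)$ has vanishing gradient, and applying the antipodal map twice forces this constant to be $0$; thus $\chi$ is antipodal-\emph{symmetric}, whence $\tilde v(-\br) = e^{i\chi(\br)}v(-\br) = \pm\tilde v(\br)$ carries exactly the same sign as $v$.

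With $\ba = \ba_0$, I would pass to polar coordinates, where $D_\br^0 = -i\hat r\,\partial_r + r^{-1}\hat\theta(-i\partial_\theta + \tfrac{\Phi}{2\pi})$, discard the non-negative radial term $|\partial_r\tilde v|^2$, and reduce to the pointwise-in-$r$ estimate $\int_0^{2\pi}|(-i\partial_\theta + \tfrac{\Phi}{2\pi})\tilde v|^2\,d\theta \ge \lambda^2\int_0^{2\pi}|\tilde v|^2\,d\theta$, which is then integrated against $r^{-2}\cdot r\,dr$ to recover $\int_\Omega |\tilde v|^2/|\br|^2\,d\br$. Expanding $\tilde v(r,\cdot) = \sum_m \tilde v_m(r)e^{im\theta}$, the operator $-i\partial_\theta + \tfrac{\Phi}{2\pi}$ acts as multiplication by $m + \tfrac{\Phi}{2\pi}$ on the $m$-th mode, so by Parseval $\lambda^2 = \min_m(m + \tfrac{\Phi}{2\pi})^2$ over the admissible modes. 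Here the parity enters decisively: since $\tilde v(-\br) = (-1)^m\tilde v(\br)$ on the mode $e^{im\theta}$, antipodal symmetry forces $m$ even, giving $\lambda^2 = \min_{k\in\Z}|\tfrac{\Phi}{2\pi} - 2k|^2$ and hence \eqref{magnetic-Hardy-boson}, while antipodal antisymmetry forces $m$ odd, giving $\lambda^2 = \min_{k\in\Z}|\tfrac{\Phi}{2\pi} - (2k+1)|^2$ and hence \eqref{magnetic-Hardy-fermion}.

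The routine ingredients (the polar form of $|D_\br v|^2$, Parseval, and the smoothness of $v$ on $\Omega$ justifying the mode expansion for each fixed $r$, with no boundary terms since we only drop a non-negative radial contribution) are standard. The one point demanding genuine care is the global gauge step on the non-simply-connected annulus: the exactness of $\ba - \ba_0$ relies on the circulation matching $\Phi$, and the preservation of the symmetry class rests on verifying that $\chi$ is even. This interplay between the \emph{antisymmetry} of $\ba$ and the \emph{parity} of $v$ is precisely what upgrades the usual magnetic Hardy constant $\min_k|\tfrac{\Phi}{2\pi}-k|^2$ to the sharper even- and odd-lattice constants stated above.
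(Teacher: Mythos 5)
Your proposal is correct and follows essentially the same route as the paper's proof: gauge-transform to the canonical Aharonov--Bohm potential $\frac{\Phi}{2\pi}\br^{-1}I$, note that the antisymmetry of both potentials forces the gauge function $\chi$ to be antipodal-symmetric (so the parity class of $v$ is preserved), then pass to polar coordinates, drop the radial term, and Fourier-expand in the angular variable with the parity restricting to even or odd modes. Your treatment of the single-valuedness of $\chi$ on the annulus and of the constant $\chi(-\br)-\chi(\br)$ is in fact slightly more explicit than the paper's.
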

\begin{proof}
	There exists a gauge transformation 
	$v \mapsto \tilde{v} = e^{i\chi} v$ such that
	$|D_{\br}v|^2 = |(-i\nabla_{\br} + \tilde{\ba})\tilde{v}|^2$,
	where
	$\ba \mapsto \tilde{\ba}(\br) := \frac{\Phi}{2\pi}\br^{-1}I$
	on $\Omega$.
	Note that $\chi(\br)$, being the integral of a difference
	of two gauge potentials $\ba(\br)$ and $\tilde{\ba}(\br)$, 
	both \emph{antisymmetric} w.r.t. $\br \mapsto -\br$, 
	must be \emph{symmetric} under this antipodal map.
	Hence, if $v$ is antipodal-(anti)symmetric, then so is $\tilde{v}$.
	
	Now, we write the gauge-transformed l.h.s. of \eqref{magnetic-Hardy-boson} 
	in terms of polar coordinates $(r,\varphi)$,
	$$
		\int_{\Omega} |D_{\br}v|^2 \,d\br
		= \int_0^{2\pi} \!\! \int_{R_1}^{R_2} |\partial_r \tilde{v}|^2 r \,dr d\varphi
		+ \int_0^{2\pi} \!\! \int_{R_1}^{R_2} \frac{1}{r^2}\left| 
			\left(-i\partial_\varphi + \frac{\Phi}{2\pi} \right)\!\tilde{v}
			\right|^2 \! r \,dr d\varphi.
	$$
	Considering only the last term involving $\partial_\varphi$,
	and Fourier expanding $\tilde{v}(\br)$ on $\Omega$,
	\begin{equation} \label{Fourier-expansion}
		\tilde{v}(r,\varphi) = \tilde{v}(\br = r\be_1e^{\varphi I}) 
			= \frac{1}{\sqrt{2\pi}} \sum_{k \in \Z} \tilde{v}_k(r) e^{i2k\varphi}
	\end{equation}
	(note that we have here used the fact that 
	$\tilde{v}(r,\varphi+\pi) = \tilde{v}(r,\varphi)$ for all $\varphi$),
	we find
	\begin{equation} \label{Fourier-bound}
		\int_{R_1}^{R_2} \sum_{k \in \Z} 
			\left| 2k + \frac{\Phi}{2\pi} \right|^2 \frac{|\tilde{v}_k|^2}{r^2} r \,dr
		\ \ge \ 
		\min_{k \in \Z} \left| \frac{\Phi}{2\pi} - 2k \right|^2 
			\int_{\Omega} \frac{|v|^2}{|\br|^2} \,d\br,
	\end{equation}
	and hence arrive at the inequality \eqref{magnetic-Hardy-boson}.
	For the case of antipodal-anti\-sym\-met\-ric $v$, we can Fourier expand
	$\tilde{v}$ in odd powers of $e^{ik\varphi}$ and arrive at
	\eqref{magnetic-Hardy-fermion}.
\end{proof}

	Now, let us for simplicity first apply this to
	the case of only two anyons and prove
	a relative Hardy inequality for this system.

\begin{lem}[Relative two-anyon Hardy] \label{lem:two-anyon-Hardy}
	Let $\Omega$ be an open convex set in $\R^2$ and let 
	$u \in \Dc^2_{0,\alpha}$
	be a two-anyon wave function.
	Then
	\begin{equation} \label{two-anyon-Hardy}
		\int_{\Omega \circ \Omega} \left( |D_1 u|^2 + |D_2 u|^2 \right) d\bx_1 d\bx_2
		\ge 2 \min_{k \in \Z} |\alpha - 2k|^2 
			\int_{\Omega \circ \Omega} \frac{|u|^2}{|\bx_1 - \bx_2|^2} 
			\,d\bx_1 d\bx_2,
	\end{equation}
	where
	\begin{equation} \label{Omega-circ-Omega}
		\Omega \circ \Omega := \left\{ \textstyle (\bx_1,\bx_2) \in \Omega^2 :
		\frac{1}{2}|\bx_1 - \bx_2| < \dist(\frac{1}{2}(\bx_1 + \bx_2), \Omega^c) \right\}
	\end{equation}
	(in particular, $\Omega \circ \Omega \subsetneq \Omega^2$,
	unless $\Omega = \R^2$).
\end{lem}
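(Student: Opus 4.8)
The plan is to split the two-body problem into its centre-of-mass and relative parts and then to apply the symmetric magnetic Hardy inequality of Lemma~\ref{lem:magnetic-Hardy} on each relative fibre. I first pass to the variables $\bR := \tfrac12(\bx_1+\bx_2)$ and $\br := \bx_1-\bx_2$, for which the Jacobian is $1$, so that $d\bx_1\,d\bx_2 = d\bR\,d\br$ and $|\bx_1-\bx_2| = |\br|$. Since $\nabla_1 = \tfrac12\nabla_{\bR}+\nabla_{\br}$, $\nabla_2 = \tfrac12\nabla_{\bR}-\nabla_{\br}$, and $\bA_1 = \alpha\,\br^{-1}I = -\bA_2$, setting $P := -\tfrac{i}{2}\nabla_{\bR}$ and $D_{\br} := -i\nabla_{\br} + \alpha\,\br^{-1}I$ yields the clean factorisation $D_1 = P + D_{\br}$ and $D_2 = P - D_{\br}$. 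The cross terms then cancel pointwise in the sum,
\[
	|D_1 u|^2 + |D_2 u|^2 = |(P+D_{\br})u|^2 + |(P-D_{\br})u|^2 = 2|Pu|^2 + 2|D_{\br}u|^2,
\]
and since $2|Pu|^2 = \tfrac12|\nabla_{\bR}u|^2 \ge 0$ this non-negative centre-of-mass term may simply be discarded, leaving $\int_{\Omega\circ\Omega}(|D_1u|^2+|D_2u|^2) \ge 2\int_{\Omega\circ\Omega}|D_{\br}u|^2$.

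Next I fibre the $\br$-integral over fixed $\bR$. By \eqref{Omega-circ-Omega} the membership condition is exactly $\tfrac12|\br| < \dist(\bR,\Omega^c)$, so for each $\bR\in\Omega$ the admissible $\br$ fill the punctured disc $B_{2d_{\bR}}(0)\setminus\{0\}$ with $d_{\bR}:=\dist(\bR,\Omega^c)$ --- precisely the annular domain of Lemma~\ref{lem:magnetic-Hardy} with inner radius $R_1=0$ (convexity of $\Omega$ ensures the midpoint $\bR$ indeed lies in $\Omega$ whenever $\bx_1,\bx_2\in\Omega$). On this fibre I set $v(\br):=u(\bR+\tfrac12\br,\bR-\tfrac12\br)$; the bosonic symmetry $u(\bx_1,\bx_2)=u(\bx_2,\bx_1)$ of $u\in\Dc^2_{0,\alpha}$ becomes the antipodal \emph{symmetry} $v(-\br)=v(\br)$, while $\ba(\br):=\alpha\,\br^{-1}I$ is antipodal-antisymmetric, curl-free off the origin, and carries flux $\Phi=\int_\Gamma\ba\cdot d\br = 2\pi\alpha$, so that $\Phi/(2\pi)=\alpha$.

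I then apply the symmetric case \eqref{magnetic-Hardy-boson} of Lemma~\ref{lem:magnetic-Hardy} on each fibre and integrate over $\bR\in\Omega$ by Fubini,
\[
	\int_{\Omega\circ\Omega}|D_{\br}u|^2\,d\bR\,d\br \ \ge\ \min_{k\in\Z}|\alpha-2k|^2\int_{\Omega\circ\Omega}\frac{|u|^2}{|\br|^2}\,d\bR\,d\br.
\]
Multiplying by the factor $2$ obtained in the first step and reverting to $|\br|=|\bx_1-\bx_2|$ gives exactly \eqref{two-anyon-Hardy}.

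The conceptual core is the cross-term cancellation in the first step, together with the identification $\Phi/(2\pi)=\alpha$ and the matching of the antipodal symmetries; these are short once the coordinates are fixed. The points that require care are (i) justifying the fibre-wise Fourier argument of Lemma~\ref{lem:magnetic-Hardy} for the actual wave function $u$, which I would handle by approximating with functions in $C_0^\infty(\R^4\setminus\D)\cap\Hc^2_0$ as in the definition of $\Dc^2_{0,\alpha}$, and (ii) confirming that the $\br$-fibres of $\Omega\circ\Omega$ are genuinely \emph{centred} discs, so that the antipodal-symmetry hypothesis of the magnetic Hardy inequality is met. Both are routine, but this is where the argument would break down if the geometry encoded in $\Omega\circ\Omega$ were mishandled.
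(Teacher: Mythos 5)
Your proof is correct and follows essentially the same route as the paper: pass to centre-of-mass and relative coordinates, cancel the cross terms by the parallelogram identity, drop the non-negative centre-of-mass contribution, and apply Lemma \ref{lem:magnetic-Hardy} fibrewise with flux $\Phi = 2\pi\alpha$ and the antipodal symmetry of $v$ coming from the bosonic symmetry of $u$. The only (harmless) difference is your normalization $\br = \bx_1 - \bx_2$ instead of the paper's $\frac{1}{2}(\bx_1-\bx_2)$, which slightly simplifies the factor-of-two bookkeeping.
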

\begin{proof}
	Let us introduce the center-of-mass $\bR := \frac{1}{2}(\bx_1 + \bx_2)$
	and the relative coordinate $\br := \frac{1}{2}(\bx_1 - \bx_2)$.
	Furthermore, let $v(\bR;\br) := u(\bR+\br,\bR-\br)$, and observe that
	the bosonic symmetry of $u \in \Hc^2_0$ 
	implies $v(\bR;-\br) = v(\bR;\br)$ for all $\bR \in \Omega$ and
	$\br \in \R^2$ s.t. $0 < |\br| < \dist(\bR,\Omega^c) =: \delta(\bR)$
	(possibly infinite).
	Then, by $\nabla_{\bR} = \nabla_1 + \nabla_2$, 
	$\nabla_{\br} = \nabla_1 - \nabla_2$, 
	the l.h.s. of \eqref{two-anyon-Hardy} 
	equals
	\begin{multline*}
		\int_\Omega \int_{B_{\delta(\bR)}(0)} \left( 
			\left| \left( -\frac{i}{2}\nabla_{\bR} -\frac{i}{2}\nabla_{\br} + \frac{\alpha}{2}\br^{-1}I \right)v \right|^2 \right. \\ 
			\shoveright{ + \left.
			\left| \left( -\frac{i}{2}\nabla_{\bR} +\frac{i}{2}\nabla_{\br} - \frac{\alpha}{2}\br^{-1}I \right)v \right|^2
			\right) 2 d\br d\bR }\\
		= \int_{\Omega \circ \Omega} |\nabla_{\bR} v|^2 d\br d\bR 
			+ \int_\Omega \int_{B_{\delta(\bR)}(0)} 
				\left| \left(-i\nabla_{\br} + \alpha\br^{-1}I \right)v \right|^2 d\br d\bR.
	\end{multline*}
	For the last integral w.r.t. $\br$ we can then apply
	Lemma \ref{lem:magnetic-Hardy}
	(where no gauge transformation is necessary in this case),
	and thus find
	$$
		\int_{\Omega \circ \Omega} \left( |D_1 u|^2 + |D_2 u|^2 \right) d\bx_1 d\bx_2
		\ \ge \ 
		\min_{k \in \Z} |\alpha - 2k|^2 
			\int_\Omega \int_{B_{\delta(\bR)}(0)}
			\frac{|u|^2}{|\br|^2} \,d\br d\bR,
	$$
	which implies \eqref{two-anyon-Hardy}.
\end{proof}

	An equivalent way to think about this result in terms of the
	anyon gauge, i.e. the
	space $\Hc^2_\alpha$, is that we can make a symmetric 
	interchange of the two anyons by rotating them around their 
	common center-of-mass.
	After half a turn ---
	already then completing a loop in the relative configuration space 
	$X_{2,\rel}^2 = (\R^2 \setminus \{0\}) \big/_{\br \leftrightarrow -\br}$ ---
	we can compare the function values, and
	the condition \eqref{statistics-definition} on parallel transport 
	tells us that we should pick up a phase $e^{i\alpha\pi}$. 
	We are therefore given a function $v(\br)$ on, say, 
	the upper half-plane with a periodic boundary condition 
	$v(-r\be_1) = e^{i\alpha\pi}v(r\be_1)$,
	and hence the Hardy inequality follows
	by a simple adaptation of 
	\eqref{Fourier-expansion}-\eqref{Fourier-bound}.
	
	This is straightforwardly extended 
	using Lemma \ref{lem:magnetic-Hardy}
	to cases when there
	are also fluxes inside the interchange loop.
	Intuitively,
	as we encircle $p$ anyons with a symmetric two-anyon
	interchange loop we pick up a phase $e^{i(2p+1)\alpha\pi}$
	and hence a corresponding Hardy inequality.
	More precisely, 
	this results in the following relative
	many-particle Hardy inequality for anyons.

\begin{thm}[Many-anyon Hardy] \label{thm:many-anyon-Hardy}
	Let $\Omega$ be an open convex set in $\R^2$ and let
	$u \in \Dc^N_{0,\alpha}$ be an $N$-anyon wave function. 
	Then
	\begin{equation} \label{many-anyon-Hardy}
		\int_{\Omega^N} \sum_{j=1}^N |D_j u|^2 \,dx
		\ge \frac{4 C_{\alpha,N}^2}{N} 
			\int_{\Omega^N} \sum_{i<j} \frac{|u|^2}{|\bx_i - \bx_j|^2} 
			\chi_{\Omega \circ \Omega}(\bx_i,\bx_j) \,dx,
	\end{equation}
	with $\Omega \circ \Omega$ defined in 
	\eqref{Omega-circ-Omega},
	and
	\begin{equation} \label{relative-anyon-constant}
		C_{\alpha,N} := \min_{p=0,1,\ldots,N-2} \ \min_{q \in \Z} |(2p+1)\alpha - 2q|.
	\end{equation}
\end{thm}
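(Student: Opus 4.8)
The plan is to reduce the statement, pair by pair, to the relative magnetic Hardy inequality of Lemma~\ref{lem:magnetic-Hardy}, in the same spirit as the two-anyon case of Lemma~\ref{lem:two-anyon-Hardy}, and then to recombine the pairwise bounds by a purely algebraic identity that produces the prefactor $1/N$. The starting point is the pointwise inequality
\begin{equation*}
	\sum_{j=1}^N |D_j u|^2 \ \ge \ \frac{1}{N} \sum_{i<j} \left| (D_i - D_j)u \right|^2 ,
\end{equation*}
which I would obtain by applying $\sum_{i<j}|w_i - w_j|^2 = (N-1)\sum_j |w_j|^2 - 2\sum_{i<j}\re\langle w_i,w_j\rangle \le N \sum_j |w_j|^2$ (the last step using $0 \le |\sum_j w_j|^2$) to the vectors $w_j := D_j u$ at each configuration point. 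Note that, summing $|D_i u|^2 + |D_j u|^2 \ge \tfrac12|(D_i-D_j)u|^2$ over pairs would only give the weaker constant $1/(2(N-1))$; it is the identity above that yields the sharp $1/N$.

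Fixing a pair $i<j$ and freezing the remaining particles, I would pass to the center of mass $\bR := \tfrac12(\bx_i+\bx_j)$ and relative coordinate $\br := \tfrac12(\bx_i-\bx_j)$. Since $\nabla_i - \nabla_j = \nabla_{\br}$ and
\begin{equation*}
	\bA_i - \bA_j = \alpha\br^{-1}I + \alpha\sum_{k\neq i,j}\bigl[(\br - (\bx_k-\bR))^{-1} + (\br + (\bx_k - \bR))^{-1}\bigr]I ,
\end{equation*}
the relative covariant derivative is $(D_i - D_j)u = D_{\br}u$ with $D_{\br} = -i\nabla_{\br} + \ba(\br)$ and $\ba$ the displayed potential. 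This $\ba$ is curl-free away from its vortices $\br = 0$ and $\br = \pm(\bx_k - \bR)$, and it is antipodal-antisymmetric, $\ba(-\br) = -\ba(\br)$; moreover the bosonic symmetry of $u$ under the exchange $\bx_i \leftrightarrow \bx_j$ (which is exactly $\br \mapsto -\br$) makes the relative function antipodal-symmetric, so the symmetric case of Lemma~\ref{lem:magnetic-Hardy} applies. The key computation is the flux through a circle of radius $\rho = |\br|$: the term $\alpha\br^{-1}I$ contributes $\alpha$ to $\Phi/2\pi$, while each of the $p$ particles with $|\bx_k - \bR| < \rho$ carries a vortex at $\bx_k - \bR$ and one at $-(\bx_k-\bR)$, each of winding one, hence $2\alpha$ apiece, for a total $\Phi/2\pi = (2p+1)\alpha$ --- precisely the phase $e^{i(2p+1)\alpha\pi}$ of the symmetric interchange loop enclosing $p$ particles.

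The main obstacle is that this enclosed flux is not constant in $\br$: it jumps by $2\alpha$ each time $\rho$ crosses one of the radii $|\bx_k - \bR|$, so Lemma~\ref{lem:magnetic-Hardy} cannot be applied to the whole punctured disc at once. I would handle this by partitioning the radial interval $(0,\delta(\bR))$, $\delta(\bR) := \dist(\bR,\Omega^c)$, at the distinct values $|\bx_k - \bR|$ into sub-annuli on each of which the set of enclosed particles, hence the flux $(2p+1)\alpha\cdot 2\pi$, is constant. On each sub-annulus Lemma~\ref{lem:magnetic-Hardy} gives the angular bound with constant $\min_{q\in\Z}|(2p+1)\alpha - 2q|^2 \ge C_{\alpha,N}^2$ (here $0 \le p \le N-2$, matching the range in \eqref{relative-anyon-constant}); discarding the nonnegative radial term and summing the angular integrals, which reassemble into the full integral, yields
\begin{equation*}
	\int_{B_{\delta(\bR)}(0)} |D_{\br}u|^2 \,d\br \ \ge \ C_{\alpha,N}^2 \int_{B_{\delta(\bR)}(0)} \frac{|u|^2}{|\br|^2}\,d\br .
\end{equation*}
Smoothness on the open sub-annuli is not an issue, since $u$ is supported away from the diagonal, where the collisions $\br = \pm(\bx_k - \bR)$ occur.

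Finally I would reassemble. Integrating the pointwise inequality over $\Omega^N$, changing variables $(\bx_i,\bx_j) \mapsto (\bR,\br)$ for each pair, and using convexity of $\Omega$ --- which guarantees $\bR\in\Omega$ whenever $\bx_i,\bx_j\in\Omega$, while $B_{\delta(\bR)}(\bR)\subseteq\Omega$, so that restricting to $|\br| < \delta(\bR)$ keeps both particles in $\Omega$ and exactly reproduces the cut-off $\chi_{\Omega\circ\Omega}$ --- the displayed relative bound applies fiberwise. Since $|\br| = \tfrac12|\bx_i-\bx_j|$ gives $|\br|^{-2} = 4|\bx_i-\bx_j|^{-2}$, each pair contributes $4C_{\alpha,N}^2\int \frac{|u|^2}{|\bx_i-\bx_j|^2}\chi_{\Omega\circ\Omega}$, and the $1/N$ from the algebraic identity produces exactly the constant $4C_{\alpha,N}^2/N$ of \eqref{many-anyon-Hardy}. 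As usual, one works with smooth $u$ supported away from the diagonal and passes to the full form domain by density.
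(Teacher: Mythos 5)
Your proposal is correct and follows essentially the same route as the paper: the identity $\sum_j|\bz_j|^2 \ge \frac{1}{N}\sum_{i<j}|\bz_i-\bz_j|^2$ applied to $\bz_j = D_j u$, passage to center-of-mass and relative coordinates for each pair, an annular decomposition of the relative disc $B_{\delta(\bR)}(0)$ at the radii $|\bx_k-\bR|$ so that the enclosed flux $(2p+1)\alpha\cdot 2\pi$ is constant on each piece, and Lemma~\ref{lem:magnetic-Hardy} in the antipodal-symmetric case on each sub-annulus, with the factor $4$ coming from $|\br|^{-2}=4|\bx_i-\bx_j|^{-2}$ and convexity giving the $\chi_{\Omega\circ\Omega}$ cut-off. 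Your explicit computation of $\bA_i-\bA_j$ and of the flux (one unit $\alpha$ from the statistical term plus $2\alpha$ per enclosed particle from the paired vortices at $\pm(\bx_k-\bR)$) matches the paper's accounting exactly.
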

\begin{proof}
	We use that, for any $z = (\bz_j) \in \C^{dN}$,
	\begin{equation} \label{abs-rel-identity}
		\sum_{j=1}^N |\bz_j|^2 
			= \frac{1}{N} \sum_{1 \le j<k \le N} |\bz_j - \bz_k|^2
			+ \frac{1}{N} \left| \sum_{j=1}^N \bz_j \right|^2.
	\end{equation}
	Applying this identity with $\bz_j := D_ju(x)$ and dropping the second term,
	we see that the l.h.s. of \eqref{many-anyon-Hardy} 
	is bounded below by a sum of $\binom{N}{2}$ integrals of the form
	\begin{equation} \label{many-anyon-integral-split}
		\int_{\Omega^{N-2}} \int_{(\bx_j,\bx_k) \in \Omega^2}
			\left| (D_j - D_k)u \right|^2
			d\bx_j d\bx_k \prod_{\substack{l=1,\ldots,N \\ l \neq j,k}} d\bx_l.
	\end{equation}
	Hence, we consider for each fixed choice of the $N-2$ variables $(\bx_l)$
	the remaining configurations of the pair 
	$(\bx_j,\bx_k)$ on $\Omega \circ \Omega \subseteq \Omega^2$,
	which we again parameterize by a center-of-mass 
	$\bR := \frac{1}{2}(\bx_j + \bx_k)$ 
	and relative coordinate $\br := \frac{1}{2}(\bx_j - \bx_k)$. 
	For each $\bR \in \Omega$ we can then split up the final parameterization
	of $\br \in B_{\delta(\bR)}(0) \setminus \{0\}$ into annuli,
	the first annulus extending from $r=\delta_0 := 0$ to $r=\delta_1$,
	defined to be the distance from $\bR$ 
	(where there could possibly be one particle $\bx_l$)
	to the next closest particle $\bx_{l'}$
	(or possibly several particles situated on the same distance
	from $\bR$).
	The next annulus then extends from $r=\delta_1$ to the next greater 
	distance $\delta_2$ from $\bR$ to any particles, 
	and so on, until we reach the boundary
	of the domain (or infinity) at $r = \delta_M := \delta({\bR})$.
	Now, on each annulus $A_m := B_{\delta_m} \setminus \bar{B}_{\delta_{m-1}}$,
	$m=1,\ldots,M$, 
	we have
	$$
		(D_j - D_k)u = \left( -i\nabla_{\br} + \alpha \br^{-1}I + \ba(\bR;\br) \right)v,
	$$
	where $v(\bR;\br) := u(\bR+\br,\bR-\br)$
	is antipodal-symmetric in $\br$, while
	the gauge potential $\ba(\bR;\br) := \bA(\bR+\br) - \bA(\bR-\br)$
	is antipodal-antisymmetric,
	with $\bA(\bx) := \alpha \sum_{l \neq j,k} (\bx - \bx_l)^{-1}I$ 
	being the magnetic potential at $\bx \in \bR + A_m$ 
	from all the other $N-2$ particles $\bx_l$.
	Note that $\nabla_{\br} \wedge \ba = 0$ on $A_m$,
	and that $\int_\Gamma \ba \cdot d\br = 4\pi\alpha p_m$
	for any simple loop $\Gamma \subset A_m$ 
	enclosing $\bar{B}_{\delta_{m-1}}$, 
	where $p_m$ is the number of particles $\bx_l$ inside $\bar{B}_{\delta_{m-1}}$. 
	We can therefore apply Lemma \ref{lem:magnetic-Hardy},
	with the total flux $\Phi$ in the disk $\bar{B}_{\delta_{m-1}}$ 
	given by $\Phi = 2\pi\alpha(1+2p_m)$.
	Hence, 
	\begin{align*}
		\int_{A_m} \left| (D_j - D_k)u \right|^2 d\br
		&\ge 4\min_{q \in \Z} |(2p_m+1)\alpha - 2q|^2 
			\int_{A_m} \frac{|u|^2}{|\bx_j - \bx_k|^2} \,d\br \\
		&\ge 4C_{\alpha,N}^2
			\int_{A_m} \frac{|u|^2}{|\bx_j - \bx_k|^2} \,d\br,
	\end{align*}
	and proceeding similarly for all annuli $A_m$, all points $\bR$,
	and all pairs $(j,k)$, we obtain the inequality \eqref{many-anyon-Hardy}.
\end{proof}

	Note that this theorem coincides with Lemma \ref{lem:two-anyon-Hardy}
	for $N=2$, and with Theorem 2.8 in \cite{H-O2-Laptev-Tidblom:08} 
	for fermions on $\R^2$ (since $C_{\alpha=1,N} = 1$ for all $N \ge 2$).
	We also
	note that $C_{\alpha=0,N} = 0$ is the optimal constant
	for bosons since \eqref{many-anyon-Hardy} concerns the \emph{Neumann} form, 
	i.e. we could in this case (and if $\Omega$ has finite measure)
	take $u$ to be constant so that the l.h.s. is identically zero.
	If one considers the \emph{Dirichlet} form on the other hand, 
	a non-trivial many-particle Hardy-type inequality for bosons 
	or distinguishable particles in
	two dimensions (also with a constant $\sim N^{-1}$,
	although with logarithmic factors in the potentials) 
	was derived in \cite{Lundholm}.
	However, we do not see how such a bound could be used to improve
	the method in this paper.

	Concerning intermediate statistics, we have the following observation:
	
	\begin{prop} \label{prop:fractionality-constant}
		The infimum
		$$
			C_{\alpha} := \inf_{N \in \N} C_{\alpha,N}
			= \inf_{p,q \in \Z} |(2p+1)\alpha - 2q|
		$$
		equals 
		$$
			C_\alpha = \left\{ \begin{array}{ll}
				\frac{1}{\nu}, \quad & 
				\textrm{if $\alpha = \frac{\mu}{\nu}$ with $\mu \in \Z$, $\nu \in \N_+$ relatively prime and $\mu$ odd,} \\
				0 & \textrm{otherwise.}
				\end{array} \right.
		$$
		Hence, $C_\alpha$ is strictly positive whenever
		$\alpha$ is an odd numerator fraction, but zero otherwise.
	\end{prop}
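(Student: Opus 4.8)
The plan is to strip the combinatorics down to a single statement about a coset of a subgroup of $\R$, and then split into the rational and irrational cases. First I would record the two reductions hidden in the definition. Since $C_{\alpha,N}$ is a minimum over the growing index set $p\in\{0,1,\dots,N-2\}$, it is non-increasing in $N$, so $\inf_N C_{\alpha,N}=\inf_{p\ge 0,\,q\in\Z}|(2p+1)\alpha-2q|$. To replace $\inf_{p\ge0}$ by $\inf_{p\in\Z}$ as in the statement, I would note that sending $p\mapsto -p-1$ turns $2p+1$ into $-(2p+1)$, so together with $q\mapsto -q$ the quantity $|(2p+1)\alpha-2q|$ is unchanged; allowing negative $p$ therefore produces no new values and the two infima coincide.

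The crucial structural observation is that
\[
 S := \{(2p+1)\alpha - 2q : p,q\in\Z\}
\]
is exactly the coset $\alpha + 2G$, where $G := \Z\alpha + \Z$ is the additive subgroup of $\R$ generated by $\alpha$ and $1$. Indeed $(2p+1)\alpha - 2q = \alpha + 2(p\alpha - q)$, and $p\alpha - q$ sweeps out all of $G$. Thus $C_\alpha = \dist(0,\alpha+2G) = \inf_{s\in S}|s|$, and the whole problem reduces to understanding the group $2G$ and the position of the representative $\alpha$ relative to it.

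For irrational $\alpha$, Kronecker's theorem gives that $G=\Z\alpha+\Z$ is dense in $\R$; scaling by $2$ preserves density, so $2G$, and hence the coset $\alpha+2G$, is dense, forcing $C_\alpha=0$. For rational $\alpha=\mu/\nu$ in lowest terms (with $\nu\in\N_+$, $\gcd(\mu,\nu)=1$), a one-line Bézout computation gives $G=\tfrac1\nu(\Z\mu+\Z\nu)=\tfrac1\nu\Z$, whence $2G=\tfrac2\nu\Z$ and
\[
 S = \frac\mu\nu + \frac2\nu\Z = \frac1\nu\,(\mu + 2\Z).
\]
Now the parity of $\mu$ settles everything: if $\mu$ is odd then $\mu+2\Z$ is the set of odd integers, whose least absolute value is $1$, so $C_\alpha=1/\nu$; if $\mu$ is even then $0\in\mu+2\Z$ and $C_\alpha=0$. (One may note that $\gcd(\mu,\nu)=1$ forces $\nu$ odd in the even case, though this does not affect the value.)

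There is no genuine analytic obstacle here: the entire content is the algebraic identity $S=\alpha+2G$ together with the parity bookkeeping, and the only place calling for a little care is the irrational case, where I would invoke density of $\Z\alpha+\Z$ rather than attempt explicit approximants. If one also wants the infimum to be \emph{attained} (so that $C_{\alpha,N}=1/\nu$ for all large $N$, matching the remark preceding the proposition), I would observe that for $\mu$ odd one has $\gcd(\mu,2\nu)=1$, so Bézout yields integers with $\mu x_0 + 2\nu y_0 = 1$; since the right-hand side is odd and $\mu$ is odd, $x_0$ must be odd, i.e. $x_0=2p+1$, and this produces an explicit admissible pair $(p,q)$ realizing the value $1/\nu$.
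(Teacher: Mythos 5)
Your proof is correct, and it organizes the argument around a different structural principle than the paper does. You first identify the set of attainable values as the coset $\alpha+2G$ with $G=\Z\alpha+\Z$, so that $C_\alpha=\dist(0,\alpha+2G)$; the paper instead computes case by case with explicit integer combinations. The two routes diverge most visibly in the irrational case: you only need the qualitative fact that $\Z\alpha+\Z$ is dense (a subgroup of $\R$ is either cyclic or dense, and cyclicity would force $\alpha\in\Q$), whereas the paper invokes a quantitative result of Scott on approximating irrationals by fractions with odd denominators, $\left|\alpha-\frac{2q}{2p+1}\right|<\frac{1}{(2p+1)^2}$. Your version is more self-contained; the paper's gives a rate of decay of $C_{\alpha,N}$ in $N$, which is potentially useful information beyond the bare statement. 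In the rational case the two arguments are close in substance --- both lower bounds come down to the parity observation that an odd integer minus an even integer is nonzero --- but your attainment argument is slicker: applying B\'ezout directly to the coprime pair $(\mu,2\nu)$ and reading off the parity of $x_0$ replaces the paper's separate Lemma \ref{lem:integer-relation}, which adjusts a general B\'ezout pair by $x\mapsto x+k b$, $y\mapsto y-ka$ to force the right parities. You were also right to note explicitly that the symmetry $p\mapsto-p-1$, $q\mapsto-q$ identifies the infimum over $p\ge 0$ (which is what $\inf_N C_{\alpha,N}$ gives) with the infimum over all $p\in\Z$ appearing in the statement; the paper leaves this implicit.
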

	
	For an upper bound in the non-trivial cases 
	we will use the following fact:
	
	\begin{lem} \label{lem:integer-relation}
		Given $a,b \in \Z$ coprime, where $a$ is odd,
		there exist integers $x,y$ such that $x$ is odd,
		$y$ is even, and $ax + by = 1$.
	\end{lem}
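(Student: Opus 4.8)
The plan is to start from B\'ezout's identity and then repair the parities by sliding along the one-parameter family of solutions. Since $a$ and $b$ are coprime, there exist integers $x_0,y_0$ with $a x_0 + b y_0 = 1$, and because $\gcd(a,b)=1$ the general integer solution of $ax+by=1$ is
$$
	(x,y) = (x_0 + bt,\ y_0 - at), \qquad t \in \Z.
$$
Reducing modulo $2$, and using that $a$ is odd so that $-at \equiv at \equiv t \pmod 2$, one checks that the parity pattern $(x \bmod 2,\, y \bmod 2)$ depends only on $t \bmod 2$. Hence it takes at most the two values realized at $t=0$ and $t=1$, and it suffices to show that one of the candidate pairs $(x_0,y_0)$ and $(x_0+b,\ y_0+a)$ (read mod $2$) is of the form $(\textup{odd},\textup{even})$.

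I would then split according to the parity of $b$. If $b$ is even, then reducing $ax_0+by_0=1$ modulo $2$ forces $x_0$ to be odd (since $a$ is odd), and $x_0$ stays odd after adding the even number $b$; as $a$ is odd, exactly one of $y_0$ and $y_0+a$ is even, so one of the two candidates is $(\textup{odd},\textup{even})$. If $b$ is odd, then $a$ and $b$ are both odd and
$$
	x_0+y_0 \ \equiv\ a x_0 + b y_0 \ =\ 1 \pmod 2,
$$
so precisely one of $x_0,y_0$ is odd. If $(x_0,y_0)$ is already $(\textup{odd},\textup{even})$ I take $t=0$; otherwise $(x_0,y_0)$ is $(\textup{even},\textup{odd})$ and adding $(b,a)\equiv(1,1)$ yields the desired $(\textup{odd},\textup{even})$ at $t=1$.

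There is no serious obstacle here; the only thing to watch is the parity bookkeeping, which is exactly why I organize the argument as a reduction to the two residues $t\in\{0,1\}$ followed by the short case split on $b$. Selecting the corresponding value of $t$ then produces integers $x = x_0 + bt$ odd and $y = y_0 - at$ even with $ax+by=1$, as required.
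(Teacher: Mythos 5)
Your proof is correct and follows essentially the same route as the paper: start from B\'ezout, shift along the solution family $(x_0+bt,\,y_0-at)$, and do a parity case split on $b$ (the paper phrases the shift as $ax+by = a(x+kb)+b(y-ka)$ and likewise chooses $k$ odd when needed). The only cosmetic difference is that you organize the bookkeeping via $t \bmod 2$, which changes nothing of substance.
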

	\begin{proof}
		Since $a,b$ are coprime, we can by Euclid's algorithm find
		integers $x,y$ s.t. $ax + by = 1$.
		We then use that $ax + by = a(x+kb) + b(y-ka)$ for any $k \in \Z$
		and consider the different possibilities.
		If $b$ is odd then, since $a$ is also odd, 
		either $x$ is odd and $y$ even 
		and we are done, or $x$ is even and $y$ is odd,
		but then we can choose $k$ odd and so we are done.
		In the case that $b$ is even, then either $x$ is odd and $y$ even
		and we are done, or both $x$ and $y$ are odd.
		In the latter case we can again choose $k$ odd and we are done.
	\end{proof}
	
	\begin{proof}[Proof of Proposition \ref{prop:fractionality-constant}]
		Assume that $\alpha = \frac{\mu}{\nu}$, 
		with $\mu \in \Z$, $\nu \in \N_+$ coprime, 
		and consider first the case of odd numerators:
		$\mu = 2k+1$, $k \in \Z$.
		Then
		$$
			|(2p+1)\alpha - 2q| = \frac{1}{\nu}|(2p+1)(2k+1) - 2q\nu| \ge \frac{1}{\nu},
			\quad \forall p,q \in \Z,
		$$
		i.e. $C_\alpha \ge \frac{1}{\nu}$,
		since the last absolute value is a difference between
		an odd and an even integer.
		Furthermore, Lemma \ref{lem:integer-relation} 
		guarantees the existence of 
		$p,q \in \Z$ such that 
		this absolute value is equal to one,
		hence $C_\alpha = \frac{1}{\nu}$.

		In the case of even numerator fractions, 
		i.e. $\mu = 2k$, $k \in \Z$, we have
		$|(2p+1)\alpha - 2q| = \frac{2}{\nu}|(2p+1)k - \nu q|$.
		As $\nu$ necessarily is odd, 
		we can choose $2p+1 = \nu$ and $q = k$, 
		and hence $C_\alpha = 0$.
		
		In the irrational case $\alpha \in \R \setminus \Q$,
		we can find an infinite sequence of $p,q \in \Z$ such that
		(see \cite{Scott:40})
		$$
			\left| \alpha - \frac{2q}{2p+1}\right| < \frac{1}{(2p+1)^2},
		$$
		and hence $C_\alpha = 0$.
	\end{proof}

	In Figure \ref{fig:statistics-constant}
	we have plotted a sketch of the dependence on $\alpha$ 
	of the large-$N$ constant $C_\alpha$.
	\begin{figure}[t]
		\centering
		\scalebox{0.4}{ \includegraphics{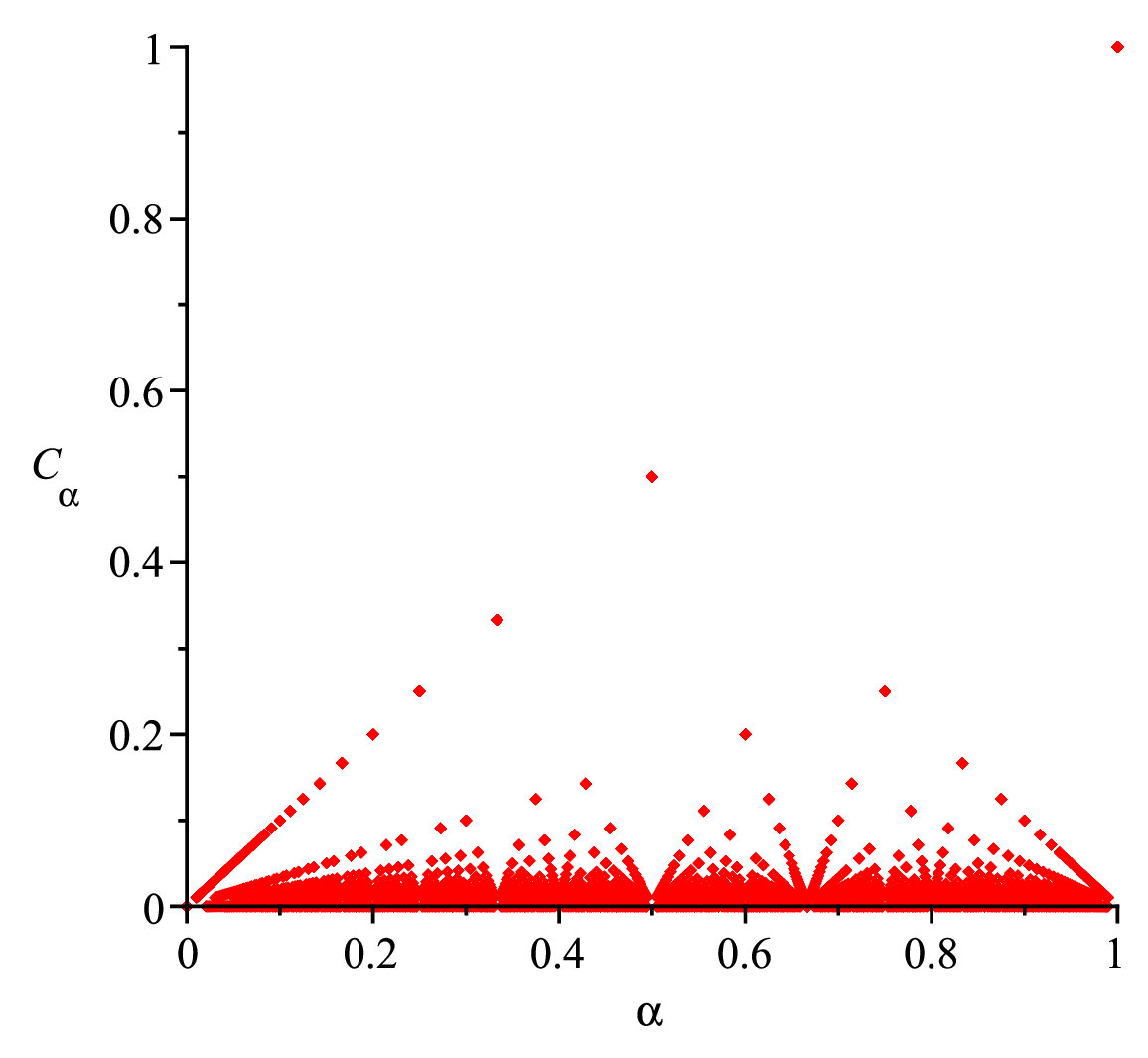} }
		\caption{A sketch of the behavior of $C_\alpha$ as a function of $\alpha$.}
		\label{fig:statistics-constant}
	\end{figure}	
	It should be emphasized that for small numbers of particles
	(relative to a fixed $\alpha$),
	the Hardy constant $C_{\alpha,N}$ is typically non-zero.
	Its graph can be obtained by cutting out wedges of slopes
	$2p+1$, $p=0,1,\ldots,N-1$, at every rational point with
	denominator $2p+1$ and an even numerator.
	In particular, $C_{\alpha,N} > 0$ for all irrational $\alpha$,
	and for even numerator fractions $\alpha = \frac{\mu}{\nu} \in (0,1)$
	the constant is strictly positive if and only if $N < \frac{\nu+3}{2}$.
	Let us also point out that we could just as well have chosen to work
	with the fermionic reference statistic, i.e. $\alpha_0 = 1$,
	for which the corresponding statistics-dependent constant is
	\begin{equation} \label{statistics-constant-beta}
		\tilde{C}_{\beta,N} := \min_{p=0,1,\ldots,N-2} \ \min_{q \in \Z} |(2p+1)\beta - (2q+1)|.
	\end{equation}
	We have here denoted the strength of the statistical interaction
	by $\beta := \alpha-1$, 
	and the values of $\beta$ for which
	this constant is bounded away from zero for all $N$
	are the fractions where
	either the numerator or denominator is even.

\section{A local Pauli exclusion principle for the kinetic energy}
	\label{sec:energy}

	Note that for odd-fractional statistics,
	i.e. for non-zero values of $C_\alpha$,
	the total constant in the many-anyon Hardy inequality
	\eqref{many-anyon-Hardy} still tends to zero like $N^{-1}$. 
	Naively\footnote{Note that the sharp large-N behavior of 
	this constant is not clear even in the fermionic case, 
	cp. \cite{H-O2-Laptev-Tidblom:08}.},
	this would be insufficient for a non-trivial bound on the
	energy of the anyon gas in the thermodynamic limit, 
	since the energy per area 
	($L^2$, say, to which the gas is confined) due to 
	Theorem \ref{thm:many-anyon-Hardy} yields
	$$
		\frac{T}{L^2} \ \ge \ \frac{1}{L^2} \cdot \frac{4C_\alpha^2}{N} 
			\cdot \frac{\binom{N}{2}}{2L^2} \cdot \int |u|^2 dx
		= \textrm{const} \cdot \frac{N-1}{N^2} \cdot \bar{\rho}^2 \to 0,
	$$
	as $N \to \infty$, with fixed density $\bar{\rho} := N/L^2$. 
	However, Theorem \ref{thm:many-anyon-Hardy} is stronger than that, and
	we can choose not to consider this global 
	gain in energy due to statistics directly,
	but rather its \emph{local} implications
	upon cutting the space up and employing Neumann boundary conditions.
	This local approach is in the spirit of Dyson and Lenard's original proof
	of the stability of matter \cite{Dyson-Lenard:67}.
	By choosing the size of such local regions appropriately,
	the energy can then be lifted to a stronger bound 
	on the full domain $\R^2$
	(see \eqref{kinetic-LT-anyons} and \eqref{gs-energy-bound} below).
	
	Central for this approach is the following version of Lemma 5
	in \cite{Dyson-Lenard:67} for anyons.
	We refer to this as a local exclusion principle for the kinetic
	energy of anyons, since it implies that $n \ge 2$ anyons
	must have positive energy and therefore cannot all occupy the
	lowest zero energy state.

\begin{lem}[Local energy / exclusion principle] \label{lem:energy_in_ball}
	Let $u \in \Dc^n_{0,\alpha}$ be a wave function of $n$ anyons 
	and let $\Omega \subseteq \R^2$ be either a disk or a square,
	with area $|\Omega|$.
	Then
	\begin{equation} \label{energy_in_ball}
		\int_{\Omega^n} \sum_{j=1}^n |D_j u|^2 \,dx
		\ \ge \ (n-1) \frac{c_\Omega C_{\alpha,n}^2}{|\Omega|} 
			\int_{\Omega^n} |u|^2 \,dx,
	\end{equation}
	where $c_\Omega$ is a constant which satisfies $c_\Omega \ge 0.169$
	for the disk, and $c_\Omega \ge 0.112$ for the square.
\end{lem}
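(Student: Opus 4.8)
The plan is to read off the bound directly from the many-anyon Hardy inequality. Applying Theorem~\ref{thm:many-anyon-Hardy} with $N=n$ on the convex domain $\Omega$ gives
$$
	\int_{\Omega^n}\sum_{j=1}^n |D_j u|^2\,dx
	\ \ge\ \frac{4C_{\alpha,n}^2}{n}\int_{\Omega^n} w(x)\,|u|^2\,dx,
	\qquad
	w(x):=\sum_{i<j}\frac{\chi_{\Omega\circ\Omega}(\bx_i,\bx_j)}{|\bx_i-\bx_j|^2}.
$$
Since $\frac{4}{n}\cdot\frac{n(n-1)}{4}=n-1$, the claim \eqref{energy_in_ball} would follow immediately from a lower bound on the configuration-space weight of the form $w(x)\ge \frac{n(n-1)}{4}\,\frac{c_\Omega}{|\Omega|}$, so my first instinct is to try to prove exactly this purely geometric estimate: that $n$ points in a disk or square always generate enough ``close'' pairs, each contributing $|\bx_i-\bx_j|^{-2}\ge (\diam\Omega)^{-2}$.

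The first thing I would check, however, is whether such a pointwise bound can possibly hold, and it cannot. If all $n$ particles sit in a thin layer just inside $\partial\Omega$, spread out tangentially (for instance near the vertices of a large inscribed regular polygon), then for every pair the diameter disk of the segment $[\bx_i,\bx_j]$ pokes out of $\Omega$, so that $\chi_{\Omega\circ\Omega}(\bx_i,\bx_j)=0$ for \emph{all} pairs and $w\equiv 0$ on a set of positive measure. Hence the inequality is genuinely \emph{not} a consequence of Theorem~\ref{thm:many-anyon-Hardy} alone, and its real content must be that a state of small kinetic energy cannot concentrate its probability on these boundary configurations; the Hardy bound only extracts the exclusion energy from configurations where particles cluster in the bulk.

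To close this gap the natural route is to not spend all of the kinetic energy on Hardy: keep a fraction $\theta$ for the estimate above and use the remaining $(1-\theta)$ together with the diamagnetic inequality $|D_j u|\ge|\nabla_j|u||$, which reduces that part to the field-free Dirichlet form of $|u|$ on $\Omega^n$. The mechanism I would then exploit is a dichotomy at the level of configurations: where the particles cluster inside a fixed concentric sub-domain of $\Omega$ the diameter disks of the close pairs are automatically contained in $\Omega$ and $w$ is already of order $n^2/|\Omega|$, whereas a configuration can only force $w$ to be small by pushing mass into the boundary layer, and transporting probability from the bulk to that layer costs Dirichlet energy controlled by a Neumann--Poincar\'e inequality on $\Omega$ (available with explicit constants for the disk and the square). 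Balancing these two contributions and optimizing over $\theta$ and over the radius defining the sub-domain should produce the stated $c_\Omega\ge 0.477$ for the disk and $c_\Omega\ge 0.358$ for the square. The hard part will be precisely this boundary-layer analysis --- making the dichotomy quantitative and squeezing out explicit, reasonably sharp constants --- rather than the invocation of Theorem~\ref{thm:many-anyon-Hardy}, which is the routine step.
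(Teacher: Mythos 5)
Your diagnosis is exactly right, and so is your high-level strategy: the paper's proof likewise observes that Theorem~\ref{thm:many-anyon-Hardy} alone is insufficient because $\Omega\circ\Omega\subsetneq\Omega^2$, splits the kinetic energy with a parameter $\kappa$ into a part feeding the relative (Hardy) estimate and a part reduced via the diamagnetic inequality to a field-free Neumann form, and then balances the cut-off Hardy potential against the Neumann spectral gap, optimizing constants numerically. What your sketch leaves out --- and what makes the balance actually computable --- is the reduction to a \emph{two-particle} problem: the paper uses the identity \eqref{abs-rel-identity-sum} to write $\sum_j|\bz_j|^2$ as a sum over pairs of $\tfrac{1-\kappa}{n-1}(|\bz_j|^2+|\bz_k|^2)+\tfrac{\kappa}{n}|\bz_j-\bz_k|^2$, so that everything comes down to the ground-state energy of a single Schr\"odinger operator $H=-\Delta^{\mathcal N}_{\Omega^2}+f$ on $\Omega^2$ with an explicit, radially cut-off Hardy potential $f$ (this reduction is also what produces the factor $n-1$ cleanly). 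The bulk/boundary dichotomy is then implemented not as a configuration-space decomposition on $\Omega^n$ but as the operator inequality $f\ge(1-\mu)PfP+(1-\mu^{-1})QfQ$ with $P$ the projection onto constants, combined with $-\Delta^{\mathcal N}_{\Omega^2}Q\ge\xi^2Q$; computing $\|PfP\|$ and $\|QfQ\|$ and optimizing over $\mu$, $\kappa$ and the cut-off radii yields $0.477$ and $0.358$. Your proposed dichotomy on the full $n$-particle space would have to quantify the measure of, and the Poincar\'e cost of concentrating on, the set of ``all pairs near the boundary'' configurations, which is far harder to do with explicit constants. So nothing in your plan is wrong, but the write-up stops exactly where the real work begins, and the two-particle Neumann-operator reduction is the missing device that makes that work feasible.
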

\begin{proof}
	Note that by rescaling,
	we can in the following assume that 
	$\Omega = B_1(0)$ or $\Omega = (-1,1)^2$. 
	We shall first consider the case of the disk, 
	and then point out what needs to be changed for the square.

	Due to $\Omega \circ \Omega \subsetneq \Omega^2$,
	the bound given by the many-anyon Hardy inequality 
	\eqref{many-anyon-Hardy} is unfortunately not sufficient 
	as it stands, and we need to modify the approach in the proof of
	Theorem \ref{thm:many-anyon-Hardy} to take the whole two-particle domain 
	$\Omega^2$ into account.
	Instead of \eqref{abs-rel-identity} we shall therefore use
	\begin{equation} \label{abs-rel-identity-sum}
		\sum_{j=1}^n |\bz_j|^2 
			= \sum_{j<k} \left( 
				\frac{1-\kappa}{n-1} \left( |\bz_j|^2 + |\bz_k|^2 \right) 
				+ \frac{\kappa}{n} |\bz_j - \bz_k|^2 \right)
				+ \frac{\kappa}{n} \Big| \sum_j \bz_j \Big|^2,
	\end{equation}
	with $0 < \kappa < 1$.
	The last term is again thrown away while the middle one
	is employed as in the proof of Theorem \ref{thm:many-anyon-Hardy}
	to produce a Hardy potential in terms of relative coordinates
	$\br$ and $\bR$. However, for the first two terms we instead use that
	$|D_j u|^2 \ge \big| \nabla_j |u| \big|$ (diamagnetic inequality).
	Ignoring $\kappa$ for a second,
	we are hence interested in the infimum of the ratio
	\begin{equation} \label{minimize-energy}
		\int_{\Omega^2} 
		\left( |\nabla_{\bx_1} u|^2 + |\nabla_{\bx_2} u|^2 
			+ \frac{C_{\alpha,n}^2}{|\br|^2} \chi_{B_{\delta(\bR)}(0)}(\br) |u|^2 \right) d\bx_1 d\bx_2
		\Bigg/ \int_{\Omega^2} |u|^2 \,d\bx_1 d\bx_2
	\end{equation}
	over $u \in H^1(\Omega^2)$,
	which is certainly greater than 
	the lowest eigenvalue of the Schr\"odinger operator 
	($c := C_{\alpha,n} \neq 0$ in the following)
	\begin{equation} \label{Neumann-operator}
		H := -\Delta_{\Omega^2}^\mathcal{N} + f,
		\qquad
		f(\bx_1,\bx_2) := c^2g(|\bR|,|\br|),
	\end{equation}
	on $\Omega^2$
	defined with Neumann boundary conditions on $\partial \Omega^2$,
	with
	$$
		g(R,r) := \left\{ \begin{array}{lll} 
			\delta^{-2}(1-\hat{R})^{-2}, \quad & R \le \hat{R}, \ & r \le \delta(1-R), \\
			r^{-2}, & R \le \hat{R}, & \delta(1-R) < r < 1-R, \\
			0, & R > \hat{R},
			\end{array} \right.
	$$
	for some fixed cut-off parameters 
	$0 < \delta,\hat{R} < 1$, to be optimized over later.
	
	Now, denoting by $P$ the projection onto the constant function
	$u_0(x) := |\Omega^2|^{-\frac{1}{2}} = \pi^{-1}$, and $Q := 1-P$, 
	we have $(-\Delta_{\Omega^2}^\mathcal{N})P = 0$, and for the 
	first non-zero Neumann eigenvalue 
	$\lambda_1 = \lambda_1(-\Delta^\mathcal{N})$,
	$$
		(-\Delta_{\Omega^2}^\mathcal{N})Q 
		\ge \lambda_1(-\Delta_{\Omega^2}^\mathcal{N}) \,Q
		= \lambda_1(-\Delta_{\Omega}^\mathcal{N}) \,Q
		= \xi^2 Q,
	$$
	where $\xi \approx 1.8412$ denotes the first zero of the
	derivative of the Bessel function $J_1$.
	Furthermore, we have since
	\begin{multline*}
		\langle u, (PfQ + QfP)u \rangle 
		= \langle f^{\frac{1}{2}}Pu, f^{\frac{1}{2}}Qu \rangle
		+ \langle f^{\frac{1}{2}}Qu, f^{\frac{1}{2}}Pu \rangle \\
		\le 2 \|f^{\frac{1}{2}}Pu\| \|f^{\frac{1}{2}}Qu\| 
		\le \mu \|f^{\frac{1}{2}}Pu\|^2 + \frac{1}{\mu} \|f^{\frac{1}{2}}Qu\|^2
		= \langle u, (\mu PfP + \mu^{-1} QfQ) u\rangle,
	\end{multline*}
	for $u \in L^2(\Omega^2)$ and $\mu > 0$, that
	$$
		f = (P+Q)f(P+Q) \ge (1-\mu)PfP + (1-\mu^{-1})QfQ.
	$$
	These operators are estimated according to 
	$\|QfQ\| \le \|f\|_\infty = \frac{c^2}{\delta^2 (1-\hat{R})^2}$, and
	\begin{align*}
		\|PfP\| &= \int_{\Omega^2} \bar{u}_0 f u_0 \,d\bx_1 d\bx_2 
		= \frac{1}{\pi^2} \int_{\Omega^2} c^2 g(R,r) \,2 d\br d\bR \\
		&= \frac{2(2\pi)^2 c^2}{\pi^2} \int_{0}^{\hat{R}} \left(
			\int_{0}^{\delta(1-R)} \delta^{-2}(1-\hat{R})^{-2} \,rdr 
			+ \int_{\delta(1-R)}^{1-R} \frac{1}{r} \,dr 
			\right) RdR \\
		&= 4 c^2 \left(\frac{1}{2} + \ln \delta^{-1}\right) \hat{R}^2,
	\end{align*}
	and hence
	\begin{align*}
		H
		&\ge (-\Delta_{\Omega^2}^\mathcal{N})P 
			+ (-\Delta_{\Omega^2}^\mathcal{N})Q 
			+ (1-\mu)PfP + (1-\mu^{-1})QfQ \\
		&\ge (1-\mu) 2 c^2 (1 + 2\ln \delta^{-1}) \hat{R}^2 \, P
		+ \left( \xi^2 - (\mu^{-1}-1) \frac{c^2}{\delta^2 (1-\hat{R})^2} \right)Q.
	\end{align*}
	With the parameter $\kappa$ from \eqref{abs-rel-identity-sum}
	reintroduced into \eqref{minimize-energy} and \eqref{Neumann-operator},
	we bound
	\begin{align*}
		H &\ge 2 c^2 (1-\mu) (1 + 2\ln \delta^{-1}) \hat{R}^2 \kappa \, P
		+ \left( \xi^2(1-\kappa) - \frac{(\mu^{-1}-1)\kappa}{\delta^2 (1-\hat{R})^2} \right)Q \\
		&\ge 0.1077 c^2,
	\end{align*}
	where the lower bound was found by numerical optimization,
	with $\mu=0.8669$, $\delta=0.5556$, $\hat{R}=0.6513$, $\kappa=0.4387$,
	$\xi^2 \ge 3.389$.
	Summing up, we have
	\begin{multline} \label{final-local-bound}
		\sum_j \int_{\Omega^n} |D_j u|^2 dx \\
		\ge \frac{1}{n} \sum_{j<k} \int_{\Omega^{n-2}} \int_{\Omega^2}
			\left( (1-\kappa)(|D_ju|^2 + |D_ku|^2) 
				+ \kappa f(\bx_j,\bx_k) \right) d\bx_j d\bx_k \, dx' \\
		\ge (n-1) \frac{c_\Omega c^2}{|\Omega|} \int_{\Omega^n} |u|^2 dx,
	\end{multline}
	with $c_\Omega \ge 0.1077 \cdot \pi/2 \ge 0.169$.
	
	In the case that $\Omega = (-1,1)^2 \supseteq B_1(0)$,
	we can use the same $f$ and $g$ as above
	(extended by zero outside $B_1(0)^2$),
	so that the bound on $\|QfQ\|$ is unchanged, 
	$\|PfP\|$ is multiplied by the square of the area ratio $\frac{\pi}{4}$,
	and $(-\Delta_{\Omega^2}^\mathcal{N})Q \ge \frac{\pi^2}{4} Q$.
	We find
	\begin{align*}
		H &\ge \frac{\pi^2 c^2}{8} (1-\mu) (1 + 2\ln \delta^{-1}) \hat{R}^2 \kappa \, P
		+ \left( \frac{\pi^2}{4}(1-\kappa) - \frac{(\mu^{-1}-1)\kappa}{\delta^2 (1-\hat{R})^2} \right)Q \\
		&\ge 0.056 c^2,
	\end{align*}
	with $\mu=0.8681$, $\delta=0.5928$, $\hat{R}=0.661$, $\kappa=0.387$,
	and hence \eqref{final-local-bound} holds 
	with the constant $c_\Omega \ge 0.056 \cdot 4/2 = 0.112$.
\end{proof}

\begin{rem}
	More general convex domains can be treated in a similar way using
	\cite{Payne-Weinberger:60}. 
	Also note that the bound \eqref{energy_in_ball} 
	for the disk holds with
	$c_\Omega = \pi \xi^2 \approx 10.65$ in the case that $\alpha = 1$ 
	(compare with Lemma 5 in \cite{Dyson-Lenard:67}).
	For possible ways of improving the constant $c_\Omega$ in the general case, 
	see Appendix A. 
\end{rem}

\section{A Lieb-Thirring inequality for anyons}
	\label{sec:Lieb-Thirring}

	Given a \emph{normalized} $N$-anyon wave function 
	$u \in \Hc_{\alpha_0}^N$ 
	we will in the following denote
	by
	$$
		\rho(\bx) := \sum_{j=1}^N \int_{\R^{2(N-1)}} 
			|u(\bx_1,\ldots,\bx_{j-1},\bx,\bx_{j+1},\ldots,\bx_N)|^2
			\prod_{k \neq j} d\bx_k
	$$
	the one-particle density,
	such that $\int_{\R^2} \rho(\bx) \,d\bx = N$.
	We start by reformulating 
	Lemma \ref{lem:energy_in_ball} in terms of $\rho$.

\begin{lem}[Local exclusion principle] \label{lem:Pauli_on_box}
	Let $u \in \Dc_{0,\alpha}^N$  be an $N$-anyon wave function on $\R^2$
	and $\Omega \subseteq \R^2$ a simply connected domain on which 
	\eqref{energy_in_ball} holds for some constant $c_\Omega$.
	Then
	\begin{equation} \label{Pauli_on_box}
		T_\Omega := \sum_{j=1}^N \int_{\R^{2N}} |D_j u|^2 \chi_\Omega(\bx_j) \,dx
		\ \ge \ \frac{c_\Omega C_{\alpha,N}^2}{|\Omega|} 
		\left( \int_\Omega \rho \ - 1 \right).
	\end{equation}
\end{lem}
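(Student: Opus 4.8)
The plan is to deduce the statement for the full space $\R^2$ from the already-established local energy bound \eqref{energy_in_ball} by summing over the different particle-number sectors inside $\Omega$. The key observation is that the quantity $T_\Omega$ only sees the kinetic energy contributions when a particle sits inside $\Omega$, and the local exclusion principle in Lemma \ref{lem:energy_in_ball} gives a lower bound that grows linearly in the number of particles present. So the natural strategy is to decompose the $N$-anyon configuration space according to how many of the $N$ particles land in $\Omega$, and on each such sector apply \eqref{energy_in_ball} for the appropriate number $n$ of particles.

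More concretely, I would first symmetrize. Since $u$ is totally symmetric, I can write
\begin{equation*}
	T_\Omega = \sum_{j=1}^N \int_{\R^{2N}} |D_j u|^2 \chi_\Omega(\bx_j) \,dx,
\end{equation*}
and partition $\R^{2N}$ according to the subset $S \subseteq \{1,\ldots,N\}$ of particles lying in $\Omega$, i.e. insert the partition of unity $1 = \sum_{S} \prod_{i \in S}\chi_\Omega(\bx_i)\prod_{i \notin S}\chi_{\Omega^c}(\bx_i)$. On the sector where exactly the particles indexed by $S$ (with $|S| = n$) are in $\Omega$, the integrand $|D_j u|^2\chi_\Omega(\bx_j)$ contributes only for $j \in S$, and the magnetic Hardy/energy bound should be applied to the $n$ variables in $\Omega$ with the remaining $N-n$ variables held fixed in $\Omega^c$. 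The subtle point is that Lemma \ref{lem:energy_in_ball} was stated for an $n$-anyon wave function, whereas here the other $N-n$ particles outside $\Omega$ still contribute Aharonov-Bohm fluxes to the covariant derivatives $D_j$ of the particles inside. However, since those external particles lie outside the simply connected region $\Omega$, their flux contribution is a curl-free gauge potential on $\Omega^n$ that can be gauged away (this is where simple connectedness of $\Omega$ is used), leaving an $n$-anyon problem of exactly the form \eqref{energy_in_ball}. Applying that bound on each sector gives a factor $(n-1)\frac{c_\Omega C_{\alpha,n}^2}{|\Omega|}$, and since $C_{\alpha,n}$ is non-increasing in $n$ one has $C_{\alpha,n} \ge C_{\alpha,N}$, so every sector is bounded below by $(n-1)\frac{c_\Omega C_{\alpha,N}^2}{|\Omega|}$ times its $L^2$-mass.

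Summing these sector bounds, the total becomes $\frac{c_\Omega C_{\alpha,N}^2}{|\Omega|}$ times the expectation of $(n_\Omega - 1)_{}$, where $n_\Omega$ is the number operator counting particles in $\Omega$; but one must be careful that the sector with $n=0$ contributes nothing (no particle in $\Omega$, hence $T_\Omega$ gets no contribution and $n-1$ would be negative), so the clean way is to bound $(n-1)$ from below using only the sectors with $n\ge 1$ and to note that $\sum_{n\ge 0}(n-1)\Pr(n_\Omega = n)$ relates to the expected particle number. The identity $\mathbb{E}[n_\Omega] = \int_\Omega \rho$, which follows directly from the definition of $\rho$ and symmetry, then yields $T_\Omega \ge \frac{c_\Omega C_{\alpha,N}^2}{|\Omega|}\bigl(\int_\Omega \rho - 1\bigr)$. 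I expect the main obstacle to be the gauge-removal step: verifying rigorously that the external fluxes reduce the problem on each sector to a genuine $n$-anyon configuration to which \eqref{energy_in_ball} applies, and handling the bookkeeping so that the $n=0$ and $n=1$ sectors (where $n-1 \le 0$) are accounted for without over-counting — which is precisely why the bound appears with the harmless "$-1$" on the right-hand side.
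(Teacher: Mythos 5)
Your proposal is correct and follows essentially the same route as the paper: the same partition of unity over subsets of particles inside $\Omega$, the same gauging away of the Aharonov--Bohm potentials of the external particles (using that $\Omega$ is simply connected), the monotonicity $C_{\alpha,n} \ge C_{\alpha,N}$, and the final identification of $\sum_{A}(|A|-1)\prod_{k\in A}\chi_\Omega(\bx_k)\prod_{k\notin A}(1-\chi_\Omega(\bx_k))$ with $\sum_j \chi_\Omega(\bx_j) - 1$, whose expectation is $\int_\Omega\rho - 1$. No substantive differences.
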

\begin{proof}
	Using that
	$$
		1 = \prod_{k=1}^N \big( \chi_\Omega(\bx_k) + (1 - \chi_\Omega(\bx_k)) \big)
		= 	\sum_{A \subseteq \{1,\ldots,N\}} 
			\prod_{k \in A} \chi_\Omega(\bx_k) 
			\prod_{k \notin A} (1 - \chi_\Omega(\bx_k)),
	$$
	the l.h.s. of \eqref{Pauli_on_box} is
	\begin{multline*}
		T_\Omega = \int_{\R^{2N}}
			\sum_{j=1}^N |D_j u|^2 \, \chi_\Omega(\bx_j) 
			\sum_{A \subseteq \{1,\ldots,N\}} 
			\prod_{k \in A} \chi_\Omega(\bx_k) 
			\prod_{k \notin A} (1 - \chi_\Omega(\bx_k)) \,dx \\
		= \sum_{A \subseteq \{1,\ldots,N\}}
			\int_{\R^{2N}} \sum_{j \in A} |D_j u|^2 
			\prod_{k \in A} \chi_\Omega(\bx_k) 
			\prod_{k \notin A} (1 - \chi_\Omega(\bx_k)) \,dx.
	\end{multline*}
	We now apply Lemma \ref{lem:energy_in_ball} to each term 
	in the first summation above, which involves
	a partition $A$ of the $N$ particles into 
	$n := |A|$ of them being inside the domain $\Omega$, 
	while the remaining $N-n$ residing outside, 
	and therefore whose contributions to the magnetic potentials 
	$\bA_{j \in A}$
	can be gauged away. 
	Thus, we find
	\begin{multline*}
		T_\Omega \ge 
			\sum_{A \subseteq \{1,\ldots,N\}}
			\frac{c_\Omega C_{\alpha,|A|}^2}{|\Omega|} \left( |A| - 1 \right)_+ \!\!\!\!
			\int\limits_{\bx_{k \notin A} \notin \Omega} \ 
			\int\limits_{\bx_{k \in A} \in \Omega}
			|u|^2 \prod_{k \in A} d\bx_k \prod_{k \notin A} d\bx_k \\
		\ge \frac{c_\Omega C_{\alpha,N}^2}{|\Omega|}
			\int_{\R^{2N}} |u|^2 \!\!\!
			\sum_{A \subseteq \{1,\ldots,N\}} (|A| - 1)
			\prod_{k \in A} \chi_\Omega(\bx_k) 
			\prod_{k \notin A} (1 - \chi_\Omega(\bx_k)) \,dx.
	\end{multline*}
	We then revert the above procedure using 
	$\int_{\R^{2N}} |u|^2 = 1$ and
	\begin{multline*}
		\sum_{A \subseteq \{1,\ldots,N\}} \underbrace{|A|}_{=\sum_{j \in A}}
			\prod_{k \in A} \chi_\Omega(\bx_k) 
			\prod_{k \notin A} (1 - \chi_\Omega(\bx_k)) \\
		= \sum_{j=1}^N \sum_{A \subseteq \{1,\ldots,N\}} \chi_\Omega(\bx_j)
			\prod_{k \in A} \chi_\Omega(\bx_k) 
			\prod_{k \notin A} (1 - \chi_\Omega(\bx_k))
		= \sum_{j=1}^N \chi_\Omega(\bx_j),
	\end{multline*}
	which produces \eqref{Pauli_on_box}.
\end{proof}

\begin{lem}[Local uncertainty principle] 
	\label{lem:box-LT-anyons}
	Let $u \in \Dc_{0,\alpha}^N$  be an $N$-anyon wave function on $\R^2$,
	and $Q$ a square with area $|Q|$.
	Then
	\begin{equation} \label{box-LT-anyons}
		\sum_{j=1}^N \int_{\R^{2N}} |D_j u|^2 \chi_Q(\bx_j) \,dx
		\ \ge \ C_2'\, ({\textstyle \int_Q \rho})^{-1} \int_Q \left[
			\rho(\bx)^{\frac{1}{2}} - \left( \frac{\int_Q \rho}{|Q|} \right)^{\frac{1}{2}}
			\right]_+^4 d\bx.
	\end{equation}
\end{lem}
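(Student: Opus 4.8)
The plan is to prove \eqref{box-LT-anyons} --- which, in contrast to the exclusion principle of Lemma~\ref{lem:Pauli_on_box}, does not involve the statistics at all --- by reducing it to a purely one-body Sobolev inequality for $\sqrt{\rho}$ on $Q$. First I would invoke the diamagnetic inequality $|D_j u|^2 \ge \big| \nabla_j |u| \big|^2$ (already used in the proof of Lemma~\ref{lem:energy_in_ball}) to discard the magnetic potential, bounding the left-hand side from below by $\sum_{j=1}^N \int_{\R^{2N}} \big| \nabla_j |u| \big|^2 \chi_Q(\bx_j)\,dx$. Writing $\psi := |u|$ and letting $\rho_j$ be the $j$-th one-particle marginal of $\psi^2$ (so that $\rho = \sum_j \rho_j$), a Cauchy--Schwarz estimate in the integrated variables gives the pointwise bound $|\nabla\rho_j|^2/(4\rho_j) \le \int_{\R^{2(N-1)}} |\nabla_j\psi|^2 \prod_{k\neq j} d\bx_k$, and a second Cauchy--Schwarz in $j$ together with $\rho = \sum_j \rho_j$ yields $|\nabla\sqrt{\rho}|^2 \le \tfrac14\sum_j |\nabla\rho_j|^2/\rho_j$. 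Since the cutoff $\chi_Q(\bx_j)$ only restricts the free variable $\bx_j$ to $Q$, both steps localize, and integrating over $Q$ produces the localized Hoffmann--Ostenhof bound
\[
	\sum_{j=1}^N \int_{\R^{2N}} \big| \nabla_j |u| \big|^2 \chi_Q(\bx_j)\,dx
	\ \ge\ \int_Q |\nabla\sqrt{\rho}|^2\,d\bx .
\]

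It then remains to establish the scale-invariant one-body inequality
\[
	\left(\int_Q |\nabla\phi|^2\right)\left(\int_Q \phi^2\right)
	\ \ge\ C_2' \int_Q [\phi - c]_+^4\,d\bx,
	\qquad
	c := \left(\frac{1}{|Q|}\int_Q \phi^2\right)^{\!1/2},
\]
for $\phi \in H^1(Q)$, and then apply it with $\phi := \sqrt{\rho}$: since $\int_Q \phi^2 = \int_Q \rho$ and $c = (\int_Q \rho / |Q|)^{1/2}$, this is precisely \eqref{box-LT-anyons}. Both sides are homogeneous of the same degree under the natural scaling, so I may rescale to $|Q| = 1$, where $c = \|\phi\|_{L^2(Q)}$. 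Introducing the truncation $w := [\phi - c]_+ \ge 0$, one has pointwise $w \le \phi$ and $|\nabla w| \le |\nabla\phi|$, hence $\|w\|_{L^2} \le \|\phi\|_{L^2} = c$ and $\|\nabla w\|_{L^2} \le \|\nabla\phi\|_{L^2}$. The claim then follows from the Ladyzhenskaya--Gagliardo--Nirenberg inequality
\[
	\int_Q w^4\,d\bx \ \le\ C\,\|\nabla w\|_{L^2(Q)}^2\,\|w\|_{L^2(Q)}^2,
\]
since then $\|\nabla\phi\|_{L^2}^2 \|\phi\|_{L^2}^2 \ge \|\nabla w\|_{L^2}^2 \|w\|_{L^2}^2 \ge C^{-1}\int_Q w^4$, giving $C_2' = C^{-1}$.

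The hard part will be this last inequality. On the whole plane the bound $\|w\|_{L^4}^4 \le C\|w\|_{L^2}^2\|\nabla w\|_{L^2}^2$ is classical, but on the bounded square it fails for functions close to a nonzero constant, for which $\|\nabla w\|_{L^2} \to 0$ while $\|w\|_{L^4}, \|w\|_{L^2}$ stay bounded away from zero. Subtracting the root-mean-square value $c$ is exactly the device that excludes this: because $\int_Q \phi^2 = c^2|Q|$, the set $E := \{\phi \le c\}$ on which $w$ vanishes has positive measure, and writing $\phi = w + c$ on $E^c$ gives the quantitative lower bound $|E| \ge \|w\|_{L^2(Q)}^2/c^2$. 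The remaining task is therefore to prove the displayed Gagliardo--Nirenberg inequality, with a universal constant $C$, for functions that vanish on a subset of the square whose measure is controlled from below --- a Sobolev inequality requiring no boundary condition --- which I would deduce from the standard embedding $H^1(Q)\hookrightarrow L^4(Q)$ together with a Poincar\'e--Sobolev estimate on $Q$ exploiting the vanishing set. Once this one-body inequality is secured with an explicit constant, combining it with the diamagnetic and Hoffmann--Ostenhof reductions yields \eqref{box-LT-anyons} with an explicit $C_2'$.
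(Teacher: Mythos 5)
Your proposal is correct in outline but follows a genuinely different route from the paper. The paper's proof is one line: after the same diamagnetic reduction $|D_j u|\ge\big|\nabla_j|u|\big|$, it invokes the many-particle Neumann Lieb--Thirring inequality of Theorem \ref{thm:many-particle-NLT} in Appendix B, which is proved by Rumin's spectral-splitting method (decomposing $\phi=\phi^{e,+}+\phi^{e,-}$ with respect to the Neumann Laplacian on $Q$, controlling $\phi^{e,-}$ pointwise via Weyl asymptotics, and integrating over $e$). That argument works directly on the many-body function $|u|$ without passing through $\sqrt{\rho}$, produces the explicit constant $C_d'=d^2C_d^{-2/d}/((d+2)(d+4))$, and yields as a by-product the stronger orthonormal version \eqref{box-LT-ON}. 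Your route --- diamagnetic inequality, then the localized Hoffmann--Ostenhof bound $\sum_j\int\big|\nabla_j|u|\big|^2\chi_Q(\bx_j)\,dx\ge\int_Q|\nabla\sqrt{\rho}|^2$, then a one-body Gagliardo--Nirenberg/Poincar\'e inequality for $\sqrt{\rho}$ on $Q$ --- is precisely the alternative the authors acknowledge in the remark immediately following the lemma (``a sufficient bound can also be obtained by means of Poincar\'e and Sobolev inequalities applied to $\rho^{1/2}$'', credited to R.~Seiringer). Both of your Cauchy--Schwarz steps in the Hoffmann--Ostenhof reduction do localize correctly, since $\chi_Q$ constrains only the free variable $\bx_j$; your approach is more elementary in that it uses only classical one-body inequalities, at the price of the extra reduction to $\sqrt{\rho}$ and a less explicit constant.

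The one place where your write-up is a sketch rather than a proof is the final truncated Gagliardo--Nirenberg inequality, and the detour through the measure of the vanishing set $E$ is more awkward than necessary. A clean way to close it (after rescaling to $|Q|=1$, with $c=\|\phi\|_{L^2}$ and $w=[\phi-c]_+$): by Cauchy--Schwarz the mean satisfies $\bar\phi:=\int_Q\phi\le\|\phi\|_{L^2}=c$, hence $w\le[\phi-\bar\phi]_+\le|\phi-\bar\phi|$ pointwise, and the Poincar\'e inequality on the square gives $\|w\|_{L^2}\le C_P\|\nabla\phi\|_{L^2}$. Combining this with the Ladyzhenskaya inequality on the unit square in the form $\|w\|_{L^4}^4\le C\left(\|\nabla w\|_{L^2}^2+\|w\|_{L^2}^2\right)\|w\|_{L^2}^2$ (the zeroth-order term cannot be dropped on a bounded domain, exactly the obstruction you identify) together with $\|w\|_{L^2}\le c$ and $|\nabla w|\le|\nabla\phi|$ yields $\|w\|_{L^4}^4\le C(1+C_P^2)\,\|\nabla\phi\|_{L^2}^2\,c^2$, which is the desired one-body bound with $C_2'=[C(1+C_P^2)]^{-1}$. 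The constant obtained this way differs from the paper's, but nothing downstream (the proof of Theorem \ref{thm:kinetic-LT-anyons}) uses more than its positivity.
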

\begin{proof}
	We use $|D_j u| \ge \big| \nabla_j |u| \big|$ 
	and then apply the Neumann Lieb-Thirring inequality 
	given in Theorem \ref{thm:many-particle-NLT} 
	in the appendix to the bosonic wave function $|u|$.
\end{proof}

\begin{rem}
	A sufficient bound can also be obtained 
	by means of Poincar\'e and Sobolev
	inequalities applied to
	$\rho^{\frac{1}{2}}$
	(we thank R. Seiringer for this remark).
	Note that the limit case $Q = \R^2$, $N=1$, 
	is just a special case of the Sobolev-type inequality
	$\|\nabla u\|_2^2 \ge C_p \|u\|_2^{-4/(p-2)} \|u\|_p^{2p/(p-2)}$
	with $p=4$.
\end{rem}

\begin{lem} \label{lem:Neumann-integral}
	For any domain $\Omega \subseteq \R^2$ with 
	finite area $|\Omega|$
	we have
	\begin{equation} \label{Neumann-integral-lemma}
		\int_\Omega \left[ \rho^{\frac{1}{2}} 
			- \left( \frac{\int_\Omega \rho}{|\Omega|} \right)^{\frac{1}{2}} 
			\right]_+^4
		\ge (1-4\epsilon) \int_\Omega \rho^2 
			+ (2 - \epsilon^{-1}) \frac{(\int_\Omega \rho)^2}{|\Omega|},
	\end{equation}
	for arbitrary $\epsilon > 0$.
\end{lem}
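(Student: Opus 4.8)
The plan is to reduce \eqref{Neumann-integral-lemma} to a single pointwise inequality in the variable $\rho(\bx)$, with the average density $\bar{\rho} := |\Omega|^{-1}\int_\Omega \rho$ entering only as a fixed constant. Writing $s := \rho(\bx)^{1/2}$ and $a := \bar{\rho}^{1/2}$, the integrand on the left is $[s-a]_+^4$, and since $\int_\Omega \rho = \bar{\rho}\,|\Omega|$ by definition of $\bar\rho$, the two terms on the right arise by integration from the pointwise expressions $(1-4\epsilon)s^4$ and $(2-\epsilon^{-1})a^2 s^2$ (the latter because $a^2\!\int_\Omega\rho = \bar\rho^2|\Omega|$). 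Thus it suffices to establish, for every $s\ge 0$ and the fixed constant $a\ge 0$,
\begin{equation*}
  [s-a]_+^4 \ \ge\ (1-4\epsilon)\,s^4 + (2-\epsilon^{-1})\,a^2 s^2 ,
\end{equation*}
after which integration over $\Omega$ yields \eqref{Neumann-integral-lemma} directly. If $\bar\rho=0$ then $\rho\equiv 0$ and there is nothing to prove, so we may assume $a>0$.

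First I would dispose of the range $\epsilon \ge \tfrac14$, where no pointwise argument is needed: here $1-4\epsilon\le 0$, so Cauchy--Schwarz (Jensen) in the form $\int_\Omega\rho^2 \ge |\Omega|^{-1}(\int_\Omega\rho)^2 = \bar\rho^2|\Omega|$ gives
\begin{equation*}
  (1-4\epsilon)\int_\Omega\rho^2 + (2-\epsilon^{-1})\,\bar\rho^2|\Omega|
  \ \le\ (3-4\epsilon-\epsilon^{-1})\,\bar\rho^2|\Omega| \ \le\ 0 ,
\end{equation*}
the last step by the arithmetic--geometric mean inequality $4\epsilon+\epsilon^{-1}\ge 4\ge 3$; since the left-hand side of the lemma is manifestly nonnegative, the claim follows.

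In the remaining range $0<\epsilon\le \tfrac14$ I would prove the displayed pointwise inequality by splitting at $s=a$. For $0\le s\le a$ the left side vanishes while the right side equals $-s^2\big[(4\epsilon-1)s^2 + (\epsilon^{-1}-2)a^2\big]$; using $s\le a$ and $4\epsilon-1\le 0$ the bracket is bounded below by $(4\epsilon+\epsilon^{-1}-3)a^2\ge a^2>0$, so the right side is $\le 0$, as required. The decisive case is $s\ge a$, where $[s-a]_+^4=(s-a)^4$; expanding and regrouping, the difference of the two sides equals
\begin{equation*}
  \big(4\epsilon\,s^4 + \epsilon^{-1}a^2 s^2\big) - 4a s^3 + \big(4a^2 s^2 - 4a^3 s + a^4\big),
\end{equation*}
and the main point is that the indefinite cross term $-4as^3$ is absorbed by the AM--GM bound $4\epsilon\,s^4 + \epsilon^{-1}a^2 s^2 \ge 4a s^3$, leaving the perfect square $4a^2s^2 - 4a^3 s + a^4 = (2as - a^2)^2\ge 0$. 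I expect this last algebraic step --- choosing precisely how to weight $4\epsilon\,s^4$ against $\epsilon^{-1}a^2s^2$ so that the troublesome terms collapse into squares --- to be the only real obstacle; everything else is bookkeeping together with the elementary estimate $\int_\Omega\rho^2\ge\bar\rho^2|\Omega|$ and the normalization $\int_\Omega\rho=\bar\rho|\Omega|$.
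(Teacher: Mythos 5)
Your proof is correct, and it takes a somewhat different route from the paper's. The paper works entirely at the integral level: it first drops the positive part at a global cost of $(\int_\Omega\rho)^2/|\Omega|$ (since the truncated and untruncated integrands differ by at most $\bar\rho^2$ on the set where $\rho<\bar\rho$), then expands the fourth power under the integral and controls the two cross terms by H\"older's inequality, $\int_\Omega\rho^{1/2}\le(\int_\Omega\rho)^{1/2}|\Omega|^{1/2}$, and the weighted arithmetic--geometric mean bound $\int_\Omega\rho^{3/2}\bar\rho^{1/2}\le\epsilon\int_\Omega\rho^2+\tfrac{1}{4\epsilon}\bar\rho\int_\Omega\rho$. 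You instead reduce everything to a pointwise inequality in $s=\rho(\bx)^{1/2}$ with $a=\bar\rho^{1/2}$ fixed, which forces a case split: the pointwise claim genuinely fails for small $s$ when $\epsilon>1/4$, and you correctly notice this and dispose of $\epsilon\ge 1/4$ separately via $\int_\Omega\rho^2\ge\bar\rho^2|\Omega|$ and $4\epsilon+\epsilon^{-1}\ge 4$. The decisive absorption of $-4as^3$ by $4\epsilon s^4+\epsilon^{-1}a^2s^2$ is the same weighted AM--GM as in the paper, but where the paper uses H\"older for the other cross term $-4a^3s$ you complete the square $(2as-a^2)^2$, which is what lets you keep the positive part instead of paying for it up front. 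The pointwise argument is arguably cleaner and more self-contained (no H\"older on the domain), at the price of the extra case analysis at $\epsilon=1/4$; both yield the identical constants $(1-4\epsilon)$ and $(2-\epsilon^{-1})$.
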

\begin{proof}
	First note that
	$$
		\int_\Omega \left[ \rho^{\frac{1}{2}} 
			- \left( \frac{\int_\Omega \rho}{|\Omega|} \right)^{\frac{1}{2}} 
			\right]_+^4
		\ge \int_\Omega \left[ \rho^{\frac{1}{2}}
			- \left( \frac{\int_\Omega \rho}{|\Omega|} \right)^{\frac{1}{2}} 
			\right]^4 \ \ 
			- \frac{(\int_\Omega \rho)^2}{|\Omega|},
	$$
	where the first integral on the r.h.s. is equal to
	$$
			\int_\Omega \rho^2 
			- 4\int_\Omega \rho^{\frac{1}{2}} \left( \frac{\int_\Omega \rho}{|\Omega|} \right)^{\frac{3}{2}}
			- 4\int_\Omega \rho^{\frac{3}{2}} \left( \frac{\int_\Omega \rho}{|\Omega|} \right)^{\frac{1}{2}}
			+ 7 \frac{(\int_\Omega \rho)^2}{|\Omega|}.
	$$
	Together with H\"older's inequality applied to the negative terms,
	$$
		\int_\Omega \rho^{\frac{1}{2}} 
			\le \left( \int_\Omega \rho \right)^{\frac{1}{2}} |\Omega|^{\frac{1}{2}},
		\quad \textrm{and} \quad
		\int_\Omega \rho \cdot \rho^{\frac{1}{2}} \left( \frac{\int_\Omega \rho}{|\Omega|} \right)^{\frac{1}{2}} 
			\le \epsilon \int_\Omega \rho^2 + \frac{1}{4\epsilon} \frac{(\int_\Omega \rho)^2}{|\Omega|},
	$$
	this
	produces the inequality \eqref{Neumann-integral-lemma}.
\end{proof}

\begin{thm}[Kinetic energy inequality for anyons] \label{thm:kinetic-LT-anyons}
	Let $u \in \Dc_{0,\alpha}^N$  be an $N$-anyon wave function 
	on $\R^2$, with $N \ge 2$.
	Then
	\begin{equation} \label{kinetic-LT-anyons}
		\sum_{j=1}^N \int_{\R^{2N}} |D_j u|^2 \,dx
		\ \ge \ C_{\textup{K}} C_{\alpha,N}^2 \int_{\R^2} \rho(\bx)^2 \,d\bx,
	\end{equation}
	for some positive constant $C_{\textup{K}}$.
\end{thm}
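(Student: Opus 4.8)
The plan is to prove \eqref{kinetic-LT-anyons} by playing off the two complementary local bounds established above: the local exclusion principle (Lemma~\ref{lem:Pauli_on_box}) and the local uncertainty principle (Lemma~\ref{lem:box-LT-anyons}). For a square $Q$ abbreviate $T_Q := \sum_{j=1}^N \int_{\R^{2N}} |D_j u|^2 \chi_Q(\bx_j)\,dx$, $n_Q := \int_Q\rho$ and $S_Q := \int_Q\rho^2$. Inserting Lemma~\ref{lem:Neumann-integral} into \eqref{box-LT-anyons} with a fixed $\epsilon < 1/4$ turns the uncertainty bound into $T_Q \ge C_2'\,n_Q^{-1}\bigl((1-4\epsilon)S_Q - (\epsilon^{-1}-2)\,n_Q^2/|Q|\bigr)$, which recovers the full local density $S_Q$ but is damped by the prefactor $n_Q^{-1}$ and carries no dependence on the statistics. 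The exclusion bound \eqref{Pauli_on_box}, $T_Q \ge c_\Omega C_{\alpha,N}^2|Q|^{-1}(n_Q - 1)$, is by contrast the sole carrier of the statistical factor $C_{\alpha,N}^2$ but is only linear in $n_Q$. The two are therefore strong in opposite regimes --- uncertainty when a region holds of order one particle, exclusion when it is densely occupied --- and the proof must interpolate between them.

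Because the characteristic functions of a tiling of $\R^2$ by congruent squares sum to one and $T_Q$ is additive over any partition of a square into subsquares, the first move is to reduce \eqref{kinetic-LT-anyons} to a family of local estimates and sum, $T = \sum_Q T_Q$, seeking $T_Q$ comparable to $C_{\alpha,N}^2 S_Q$. It is natural to split $\int_Q\rho^2$ into its mean-field part $n_Q^2/|Q|$ and a fluctuation part: the integrand $[\rho^{1/2} - (n_Q/|Q|)^{1/2}]_+^4$ in \eqref{box-LT-anyons} measures precisely the excess of $\rho$ over its average on $Q$, so the uncertainty bound controls the fluctuation part, while the mean-field part is exactly what the exclusion bound is suited to. On a box carrying of order one particle the prefactor $n_Q^{-1}$ is harmless and Lemma~\ref{lem:Neumann-integral} turns \eqref{box-LT-anyons} into a genuine multiple of $S_Q$, up to a negative mean-field correction of size $n_Q^2/|Q|$ which, by the Cauchy--Schwarz inequality $n_Q^2/|Q| \le S_Q$, is of the same order as the gain; it is this correction that the statistical (exclusion) term must pay for, and doing so is what transfers the factor $C_{\alpha,N}^2$ onto the final bound.

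The remaining --- and principal --- difficulty is that no single tiling keeps $n_Q$ of order one everywhere when $\rho$ varies over many scales, and at any fixed scale a sharp concentration of particles defeats both local bounds (exclusion is only linear in $n_Q$, uncertainty is damped by $n_Q^{-1}$). Following Dyson and Lenard \cite{Dyson-Lenard:67}, I would replace the fixed grid by a dyadic subdivision adapted to $\rho$, bisecting each square and stopping as soon as $\int_Q\rho$ falls below a fixed threshold, so that on the resulting partition $\mathcal{P}$ every box carries $O(1)$ particles while its dyadic parent carries more. On such boxes the uncertainty bound is no longer damped and yields a fixed multiple of the fluctuation part of $S_Q$, whereas the exclusion bound, summed over the finer boxes produced by the subdivision, reconstitutes the fermionic mean-field contribution $\sim C_{\alpha,N}^2\sum_Q n_Q^2/|Q|$; these combine to dominate $\int_{\R^2}\rho^2$ because $n_Q^2/|Q| \le S_Q$. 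The technical heart, which I expect to be where the real work lies, is to perform this simultaneously without double-counting the kinetic energy --- splitting each $T_Q$ by a convex combination and allocating fixed fractions to the exclusion and uncertainty estimates along the dyadic tree --- and to check that the negative corrections are absorbed with constants that do not degenerate as $C_{\alpha,N}\to 0$. Summing the local inequalities over $\mathcal{P}$ and using $\sum_Q S_Q = \int_{\R^2}\rho^2$ then produces \eqref{kinetic-LT-anyons}, the universal constant $C_{\textup{K}}$ arising by optimizing over $\epsilon$, the convex weight, the stopping threshold, and the constants $c_\Omega$ and $C_2'$ of Lemmas~\ref{lem:Pauli_on_box} and~\ref{lem:box-LT-anyons}.
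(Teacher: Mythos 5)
Your overall architecture is exactly the paper's: tile adaptively, use Lemma~\ref{lem:box-LT-anyons} plus Lemma~\ref{lem:Neumann-integral} for the fluctuation part of $\int_Q\rho^2$, use Lemma~\ref{lem:Pauli_on_box} (the sole carrier of $C_{\alpha,N}^2$) for the mean-field part, and combine them by a convex split of $T_Q$ on each cell. But the decomposition as you describe it does not close, and the gap sits precisely at the point you flag as ``the technical heart.'' With the stopping rule ``bisect until $\int_Q\rho$ falls below a fixed threshold,'' \emph{every} cell of the final partition has sub-threshold occupation; the parents with large occupation are interior nodes of the tree, and since $T$ is additive only over a partition you cannot apply the exclusion bound to them without double-counting. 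So the claim that the exclusion bound ``summed over the finer boxes reconstitutes $\sum_Q n_Q^2/|Q|$'' fails: on a final cell with $n_Q\le 1$ the bound $T_Q\ge c_\Omega C_{\alpha,N}^2|Q|^{-1}(n_Q-1)$ is vacuous, and if $\rho$ is nearly constant there the uncertainty bound gives essentially nothing either, while $\int_Q\rho^2\approx n_Q^2/|Q|$ can be a non-negligible share of $\int\rho^2$ (such cells can sit at great depth, where $|Q|$ is tiny). No local estimate on such a cell produces what you need.

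The paper resolves this with two ingredients your sketch is missing. First, the stopping rule is modified: when all four children of a square $Q$ would fall below the threshold $2$, the children are discarded and $Q$ itself becomes a leaf (a ``B-square'' with $2\le\int_Q\rho<8$); only then does the partition contain cells on which the exclusion bound is effective, and there $x-1\ge\frac{7}{64}x^2$ converts the linear gain into the mean-field term $(\int_Q\rho)^2/|Q|$ needed to cancel the negative correction from Lemma~\ref{lem:Neumann-integral}. Second, the low-occupation, nearly-constant cells (those with $\int_Q\rho^2\le c(\int_Q\rho)^2/|Q|$) are not estimated locally at all: each is charged to a B-square lying \emph{deeper} in the same branch of the tree, and the bookkeeping works because $(\int_{Q_B}\rho)^2/|Q_B|\ge 4^{k+1}/|Q_0|$ grows geometrically with the level $k$ while the cumulative $\int\rho^2$ of all such ancestral cells at levels $j\le k$ is a geometric series of the same order. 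Without both of these, the negative mean-field corrections and the constant-density low-occupancy cells are not absorbed, so the proposal as written is a correct strategy outline with the decisive combinatorial step missing rather than a complete proof.
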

\begin{proof}
	Given $u$, and hence $\rho$, smooth and
	supported on some square $Q_0 \subseteq \R^2$
	(it is clearly enough to consider this case),
	we split up the domain $Q := Q_0$ into ever smaller subsquares 
	according to the following algorithm 
	(see Figure \ref{fig:splitting}):
	\begin{figure}[t]
		\centering
		\psfrag{A}{A}
		\psfrag{B}{B}
		\psfrag{T_Q}{$Q_0$}
		\includegraphics{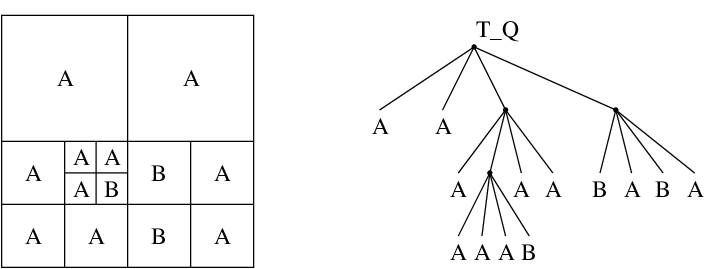}
		\caption{Example of a splitting of $Q_0$ and a corresponding
			tree $\mathbb{T}$ of subsquares. For the B-square at
			level 3 in the tree, the set $\mathcal{A}(Q)$ of associated 
			A-squares consists of 8 elements, while for the two
			B-squares at level 2, $\mathcal{A}(Q)$ coincide and
			has 4 elements.}
		\label{fig:splitting}
	\end{figure}
	\begin{itemize}
		\item A given square $Q$ is split into four subsquares
			$Q'_{j \in \{1,2,3,4\}}$ s.t. $|Q'_j| = |Q|/4$.
		
		\item Whenever $\int_{Q'_j} \rho < 2$, we will not split that 
			square $Q'_j$ further, and we mark it A.
			
		\item If all four squares $Q'_j$ are marked A, then
			we back up to the bigger square $Q$ and mark it B.
			(Thus, we then stop splitting $Q$ 
			and the subsquares $Q'_j$ are discarded.)
		
		\item For each of the unmarked squares $Q'_j$ 
			(i.e. those for which $\int_{Q'_j} \rho \ge 2$) 
			we iterate the splitting algorithm with $Q := Q'_j$.
	\end{itemize}
	In the end 
	(note that the procedure will eventually stop
	since $\rho$ is integrable),
	we will have a system of subsquares $Q$ 
	organized in a tree $\mathbb{T}$, such that:
	\begin{itemize}
		\item The whole domain $Q_0$ is covered by squares of type A or B.
		
		\item Let us denote, for a given B-square $Q_B$,
		\begin{equation*} \label{A-ancestors}
			\mathcal{A}(Q_B) := \left\{ \begin{array}{l}
				\text{All A-squares $Q_A \in \mathbb{T}$ that can be found by going }\\
				\text{back in the tree from $Q_B$ (possibly all the way to }\\
				\text{$Q_0$) and then one step forward.} 
			\end{array} \right\},
		\end{equation*}
		then every A-square $Q_A \in \mathbb{T}$ satisfies
		$Q_A \in \mathcal{A}(Q_B)$ 
		for some B-square $Q_B \in \mathbb{T}$.
		(Note that at least one B-square can be found among the leaves
		in the highest level of every branch of the tree.)
		
		\item $0 \le \int_{Q_A} \rho < 2$ for every A-square $Q_A$.
		
		\item $2 \le \int_{Q_B} \rho < 8$ for every B-square $Q_B$.
	\end{itemize}
	Finally, we also divide the A-squares into a subclass A$_1$ 
	for those subsquares on which $\rho$ is approximately constant, 
	and A$_2$ for those subsquares with a non-constant density:
	$$
		\mathcal{A}_1 := \left\{ \textstyle \text{all A-squares $Q \in \mathbb{T}$ s.t. $\int_Q \rho^2 \le c \frac{(\int_Q \rho)^2}{|Q|}$} \right\},
	$$
	$$
		\mathcal{A}_2 := \left\{ \textstyle \text{all A-squares $Q \in \mathbb{T}$ s.t. $\int_Q \rho^2 > c \frac{(\int_Q \rho)^2}{|Q|}$} \right\},
	$$
	for some fixed constant $c>1$, to be chosen below.
	We will then split the kinetic energy integral $T$ in the l.h.s. of 
	\eqref{kinetic-LT-anyons} into a sum over all the marked subsquares 
	$Q \in \mathcal{A}_1 \cup \mathcal{A}_2 \cup \mathcal{B}$ 
	forming the leaves of the tree $\mathbb{T}$, 
	and consider the three different types of squares separately. 
	
	Let us first consider the kinetic energy $T_{Q_B}$ on any B-square $Q = Q_B$.
	We split it up into two parts using $\kappa \in (0,1)$ and 
	apply both Lemma \ref{lem:box-LT-anyons} (local uncertainty principle) 
	and Lemma \ref{lem:Pauli_on_box} (local Pauli principle)
	to conclude
	\begin{multline*}
		T_{Q_B} 
		\ge \kappa \frac{C_2'}{\int_Q \rho} \int_Q \left[
				\rho^{\frac{1}{2}} - \left( \frac{\int_Q \rho}{|Q|} \right)^{\frac{1}{2}}
				\right]_+^4
			+ (1-\kappa) \frac{c_Q C_{\alpha,N}^2}{|Q|} 
				\left( \int_Q \rho - 1 \right) \\
		\ge \frac{\kappa C_2'}{8}(1-4\epsilon) \int_Q \rho^2 
			+ \left( \frac{\kappa C_2'}{8}(2 - \epsilon^{-1}) 
			+ (1-\kappa)\frac{7c_Q C_{\alpha,N}^2}{64} \right) 
			\frac{(\int_Q \rho)^2}{|Q|},
	\end{multline*}
	where we also used that $x - 1 \ge \frac{7}{64}x^2$ for $2 \le x < 8$,
	as well as Lemma \ref{lem:Neumann-integral}.
	Choosing $\kappa$ and $\epsilon$ appropriately, we conclude
	\begin{equation} \label{energy_on_B-square}
		T_{Q_B} \ge C_{\alpha,N}^2 \left( c_1 \int_{Q_B} \rho^2 
			+ c_2 \frac{(\int_{Q_B} \rho)^2}{|Q_B|} \right),
	\end{equation}
	with $c_1,c_2 > 0$.
	
	For the A$_2$-squares $Q=Q_A$, we use that the energy given by 
	the local uncertainty principle
	is large due to a
	sufficiently non-constant $\rho$:
	\begin{equation*}
		T_{Q_A} 
		\ge \frac{C_2'}{\int_Q \rho} \int_Q \left[
				\rho^{\frac{1}{2}} - \left( \frac{\int_Q \rho}{|Q|} \right)^{\frac{1}{2}}
				\right]_+^4 
		\ge \frac{C_2'}{2}\left( 1-4\epsilon - (\epsilon^{-1} - 2)c^{-1} \right) \int_Q \rho^2,
	\end{equation*}
	for $0 < \epsilon < 1/4$,
	where we again used 
	Lemmas \ref{lem:box-LT-anyons} and \ref{lem:Neumann-integral}.
	Taking $\epsilon := 1/8$ and $c := 24$ we find
	\begin{equation} \label{energy_on_A2-square}
		T_{Q_A} \ge \frac{C_2'}{8} \int_{Q_A} \rho^2.
	\end{equation}
	
	Lastly, we show that the remaining squares, of type A$_1$, have
	a negligible contribution to the energy compared to that
	already obtained from the Pauli energy on the B-squares.
	Namely, as observed above, every subsquare of type A$_1$ is
	contained in $\mathcal{A}_1(Q_B) := \mathcal{A}(Q_B) \cap \mathcal{A}_1$ 
	for some B-square $Q_B$.
	Now, note that for each B-square $Q_B \in \mathbb{T}$, 
	say at a level $k \in \N$ in the tree, 
	we have
	$$
		\frac{(\int_{Q_B} \rho)^2}{|Q_B|} \ge \frac{4}{4^{-k}|Q_0|},
	$$
	while the total integral of $\rho^2$ over 
	all A$_1$-squares associated with $Q_B$ is at most 
	$$
		\sum_{Q \in \mathcal{A}_1(Q_B)} \int_{Q} \rho^2
		\le \sum_{j=1}^k \  
			\sum_{\substack{Q \in \mathcal{A}_1(Q_B) \\ \text{at level $j$} }}
			c \frac{(\int_Q \rho)^2}{|Q|}
		\le \sum_{j=1}^k 3c \frac{4}{4^{-j}|Q_0|}
		\le 96 \frac{4^{k+1}}{|Q_0|}.
	$$
	Hence, by \eqref{energy_on_B-square},
	\begin{equation} \label{energy_on_A1-squares}
		T_{Q_B} \ge C_{\alpha,N}^2 \left( 
			c_1 \int_{Q_B} \rho^2 
			+ \frac{c_2}{96} \sum_{Q \in \mathcal{A}_1(Q_B)} \int_{Q} \rho^2
			\right).
	\end{equation}
	
	Summing everything up, it follows from 
	\eqref{energy_on_A2-square} and \eqref{energy_on_A1-squares} that
	the total kinetic energy is
	\begin{multline*}
		T = \sum_{j=1}^N \int_{\R^{2N}} |D_j u|^2 
			\left( \sum_{Q_A \in \mathcal{A}} \chi_{Q_A}(\bx_j) 
			+ \sum_{Q_B \in \mathcal{B}} \chi_{Q_B}(\bx_j) \right) dx \\
		\ge \sum_{Q_A \in \mathcal{A}_2} T_{Q_A} + \sum_{Q_B \in \mathcal{B}} T_{Q_B}
		\ge C_{\textup{K}} C_{\alpha,N}^2 \int_{Q_0} \rho^2,
	\end{multline*}
	for a positive constant 
	$C_{\textup{K}} := \min\{c_1,c_2/96,C_2'/8\}$.
\end{proof}

\begin{cor}[Lieb-Thirring inequality for anyons]
	Let $u \in \Dc_{0,\alpha}^N$  be an $N$-anyon wave function 
	and $V$ a real-valued potential on $\R^2$. Then
	\begin{equation} \label{LT-anyons}
		\sum_{j=1}^N \int_{\R^{2N}} \left( |D_j u|^2 + V(\bx_j)|u|^2 \right) \,dx
		\ \ge \ -C_{\textup{LT}} C_{\alpha,N}^{-2} \int_{\R^2} |V_-(\bx)|^2 \,d\bx,
	\end{equation}
	for some positive constant 
	$C_{\textup{LT}} = (4C_{\textup{K}})^{-1}$. 
\end{cor}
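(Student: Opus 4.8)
The plan is to deduce \eqref{LT-anyons} from the kinetic energy inequality of Theorem \ref{thm:kinetic-LT-anyons} by a pointwise completion of squares in the one-particle density. First I would rewrite the potential term in terms of $\rho$: by the definition \eqref{one-particle-density} and the normalization $\int_{\R^{2N}}|u|^2\,dx=1$,
$$
	\sum_{j=1}^N \int_{\R^{2N}} V(\bx_j)|u|^2 \,dx
	= \int_{\R^2} V(\bx)\,\rho(\bx)\,d\bx
	\ \ge\ -\int_{\R^2} V_-(\bx)\,\rho(\bx)\,d\bx,
$$
where the last step uses $V \ge -V_-$ together with $\rho \ge 0$.

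Next I would apply Theorem \ref{thm:kinetic-LT-anyons} to the kinetic part alone, so that the full left-hand side of \eqref{LT-anyons} is bounded below by
$$
	C_{\textup{K}} C_{\alpha,N}^2 \int_{\R^2} \rho^2 \,d\bx
	- \int_{\R^2} V_-\,\rho \,d\bx
	= \int_{\R^2} \Bigl( C_{\textup{K}} C_{\alpha,N}^2\,\rho^2 - V_-\,\rho \Bigr)\,d\bx .
$$
The remaining step is to estimate the integrand from below pointwise. For each fixed $\bx$, the map $t \mapsto C_{\textup{K}} C_{\alpha,N}^2\,t^2 - V_-(\bx)\,t$ is a convex parabola whose minimum over $t \in \R$ equals $-V_-(\bx)^2/(4\,C_{\textup{K}} C_{\alpha,N}^2)$, attained at $t = V_-(\bx)/(2\,C_{\textup{K}} C_{\alpha,N}^2)$. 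Integrating this pointwise bound in $\bx$ yields exactly \eqref{LT-anyons} with $C_{\textup{LT}} = (4\,C_{\textup{K}})^{-1}$.

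I do not expect a genuine obstacle, since all the difficulty has been absorbed into the kinetic energy inequality; what remains is the standard Legendre-transform (duality) passage from a bound of the form $T \ge c\int\rho^2$ to a Lieb--Thirring bound, and it requires no assumption on $V$ beyond measurability of $V_-^2$. The only point worth flagging is the degenerate case $C_{\alpha,N}=0$ (even numerator, or $N$ large relative to $\alpha$), in which $C_{\alpha,N}^{-2}$ is read as $+\infty$ and \eqref{LT-anyons} holds trivially; for odd-numerator $\alpha=\tfrac{\mu}{\nu}$ one has $C_{\alpha,N} \ge C_\alpha = \tfrac{1}{\nu} > 0$ by Proposition \ref{prop:fractionality-constant}, so the constant on the right is finite and uniform in $N$, which is what gives the $N$-independent form stated in Theorem \ref{thm:LT-anyon-intro}.
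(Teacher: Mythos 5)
Your proposal is correct and follows essentially the same route as the paper: both deduce \eqref{LT-anyons} from the kinetic energy inequality of Theorem \ref{thm:kinetic-LT-anyons} by writing the potential term as $\int V\rho \ge -\int V_-\rho$ and then optimizing the resulting quadratic expression in $\rho$, arriving at the same constant $C_{\textup{LT}}=(4C_{\textup{K}})^{-1}$. The only (immaterial) difference is that you complete the square pointwise in $\rho(\bx)$, whereas the paper first applies Cauchy--Schwarz to get $\int V_-\rho \le (\int V_-^2)^{1/2}(\int\rho^2)^{1/2}$ and then minimizes over the single scalar $\int\rho^2$; the two computations yield identical bounds.
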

\begin{proof}
	The l.h.s. is bounded below by
	$$
		C_{\textup{K}} C_{\alpha,N}^2 \int_{\R^2} \rho^2 \,d\bx - \int_{\R^2} |V_-|\rho \,d\bx
		\ge -\frac{1}{4} C_{\textup{K}}^{-1} C_{\alpha,N}^{-2} \int_{\R^2} V_-^2 \,d\bx,
	$$
	where we used 
	$\int |V_-|\rho \le (\int V_-^2 )^{\frac{1}{2}} (\int \rho^2 )^{\frac{1}{2}}$
	and minimized w.r.t. $\int \rho^2$.
\end{proof}

\begin{rem}
	Theorem \ref{thm:kinetic-LT-anyons} immediately implies the rough bound
	\begin{equation} \label{gs-energy-bound}
		T \ge C_{\textup{K}} C_{\alpha,N}^2 \int_\Omega \rho^2 \,d\bx 
		\ge \frac{C_{\textup{K}} C_{\alpha,N}^2}{|\Omega|} \left( \int_\Omega \rho \cdot 1 \,d\bx \right)^2
		= C_{\textup{K}} C_{\alpha,N}^2 \frac{N^2}{|\Omega|}
	\end{equation}
	for the ground state energy of a non-interacting gas of anyons 
	supported on a domain $\Omega \subseteq \R^2$.
\end{rem}

\section*{Appendix A: Improvements of the local energy}

	Here we give some comments on how the constant in the explicit bound
	for the energy in Lemma \ref{lem:energy_in_ball} could
	be improved.
	One alternative approach for taking the full many-particle domain 
	$\Omega^N$ into account is to extend 
	the Hardy inequality \eqref{many-anyon-Hardy} using a variant of 
	Theorem 2.2 in \cite{Balinsky:03}, which here again has been 
	modified to account for the underlying 
	symmetry.

\begin{thm}[Many-anyon Hardy on a disk] \label{thm:many-anyon-Hardy-ball}
	Let $\Omega := B_\lambda(0)$ be a disk of radius $\lambda$ in $\R^2$ and
	let $u \in \Dc^N_{0,\alpha}$. 
	Then
	\begin{equation} \label{many-anyon-Hardy-ball}
		\int_{\Omega^N} \sum_{j=1}^N |D_j u|^2 \,dx
		\ge \frac{C_{\alpha,N}^2}{N\lambda^2} 
			\int_{\Omega^N} \sum_{i<j}
			f\left( \frac{\bx_i + \bx_j}{2\lambda}, \frac{\bx_i - \bx_j}{2\lambda\sqrt{1-R^2}} \right) 
			|u|^2 \,dx,
	\end{equation}
	where
	$$
		f(\bR, \tilde{\br}) := \frac{16}{\gamma_R^2 (1-R^2)}
		\left| (1+z)^{1+\frac{1}{\gamma_R}} (1-z)^{1-\frac{1}{\gamma_R}}
			-  (1-z)^{1+\frac{1}{\gamma_R}} (1+z)^{1-\frac{1}{\gamma_R}} \right|^{-2},
	$$
	$z$ is the complexification of the renormalized coordinate $\tilde{\br}$ 
	with imaginary axis given by the unit vector $\bR/R$,
	and $\gamma_R := 1 - \frac{2}{\pi}\arcsin(R)$.
\end{thm}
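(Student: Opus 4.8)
The plan is to run the same pairwise reduction as in the proof of Theorem~\ref{thm:many-anyon-Hardy}, replacing the annular inequality applied on the inscribed disk by a sharp Hardy inequality on the \emph{full} lens-shaped relative domain, obtained through a conformal change of variables. First I would apply the identity \eqref{abs-rel-identity} with $\bz_j := D_j u$ and discard the center-of-mass term, bounding the left-hand side below by $\frac1N\sum_{i<j}\int|(D_i-D_j)u|^2$. For a fixed pair $(i,j)$ and fixed positions of the remaining $N-2$ particles I pass to $\bR=\frac12(\bx_i+\bx_j)$ and $\br=\frac12(\bx_i-\bx_j)$ exactly as before; now, however, $\br$ ranges over the whole lens $L_\bR := B_\lambda(\bR)\cap B_\lambda(-\bR)$ (the set where both $\bx_i,\bx_j\in\Omega$), and $(D_i-D_j)u=(-i\nabla_\br+\alpha\br^{-1}I+\ba)v$ with $v(\bR;\br)=u(\bR+\br,\bR-\br)$ antipodal-symmetric and the connection antipodal-antisymmetric.

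The geometric heart of the argument is that $L_\bR$ is a symmetric lens whose two corners, lying on the line through $0$ perpendicular to $\bR$, subtend an interior angle exactly $\pi\gamma_R$ with $\gamma_R=1-\frac2\pi\arcsin R$ and $R=|\bR|/\lambda$; this follows from computing the angle between the two bounding circular arcs, using $\arccos(1-2R^2)=2\arcsin R$. After the rescaling $\br\mapsto z:=\br/(\lambda\sqrt{1-R^2})$ (so the corners sit at $z=\pm1$ and the imaginary axis points along $\bR$), the holomorphic map $\Phi(z):=\left(\frac{1+z}{1-z}\right)^{1/\gamma_R}$ straightens the lens, and the further Möbius map $\zeta:=\frac{\Phi(z)-1}{\Phi(z)+1}$ sends the flux point $\br=0$ to $\zeta=0$ while intertwining $\br\mapsto-\br$ with $\zeta\mapsto-\zeta$ (because $\Phi(-z)=1/\Phi(z)$). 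I would then invoke the conformal invariance of the magnetic Dirichlet energy $\int|(-i\nabla+\ba)v|^2$ in two dimensions, which transports the problem on $L_\bR$ to one on the $\zeta$-image: a punctured disk-type domain carrying a central flux and the antipodal symmetry, i.e.\ precisely the setting of Lemma~\ref{lem:magnetic-Hardy}.

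On the $\zeta$-side I would decompose into annuli centred at $\zeta=0$ separated by the images of the other particles' singularities (which occur in antipodal pairs $\pm\zeta_l$ at equal radius), apply the symmetric case \eqref{magnetic-Hardy-boson} on each annulus with enclosed flux $2\pi\alpha(1+2p_m)$, and bound the resulting constants below by $C_{\alpha,N}^2$ as in Theorem~\ref{thm:many-anyon-Hardy} and Proposition~\ref{prop:fractionality-constant}; this produces the weight $|\zeta|^{-2}$. Pulling $|\zeta|^{-2}$ back through $z\mapsto\zeta$ and the rescaling gives the weight $\frac{1}{\lambda^2}f(\bR/\lambda,\tilde{\br})$, where the conformal factor is computed in closed form: writing $w=\Phi(z)$ one has $\zeta'/\zeta = 2\Phi'/(w^2-1)$ and $(1+z)^{1+1/\gamma_R}(1-z)^{1-1/\gamma_R}-(1-z)^{1+1/\gamma_R}(1+z)^{1-1/\gamma_R}=(1-z^2)(w-w^{-1})$, so that after collecting the prefactor $\frac{16}{\gamma_R^2(1-R^2)}$ one recovers exactly the stated $f$. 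Reassembling the pairs and integrating over $\Omega^N$ then yields \eqref{many-anyon-Hardy-ball}.

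The main obstacle is not the conformal bookkeeping but the presence of the $N-2$ spectator particles inside the lens: unlike in the two-particle case the connection $\ba$ is not radially symmetric, so the reduction to Lemma~\ref{lem:magnetic-Hardy} requires choosing the $\zeta$-annuli to separate the images of the singular points $\br=\pm(\bx_l-\bR)$ and verifying that the enclosed flux is constant and equal to $2\pi\alpha(1+2p_m)$ on each of them. Checking that this annular decomposition in the conformal coordinate is genuinely compatible with the single global weight $|\zeta|^{-2}$, and that the worst-case flux count is captured by $C_{\alpha,N}^2$ uniformly in $\bR$ and in all configurations of the spectators, is the delicate point; one must also control the degeneration $\gamma_R\to0$ as $R\to1$ (the cusp of the lens) and confirm that it does not spoil the integrability of $f$.
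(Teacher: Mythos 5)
Your proposal is correct and follows essentially the same route as the paper's own proof: the same pairwise reduction via \eqref{abs-rel-identity}, the same identification of the lens-shaped relative domain with corner angle $\gamma_R\pi$, the same three-step conformal map $z\mapsto\frac{1+z}{1-z}\mapsto w^{1/\gamma_R}\mapsto\frac{\zeta-1}{\zeta+1}$ (the paper's $F$, antipodal-equivariant since $\Phi(-z)=1/\Phi(z)$), followed by the annular decomposition and Lemma~\ref{lem:magnetic-Hardy} in the image disk and the pull-back computation $|F'|^2/|F|^2=(1-R^2)f$. The "delicate points" you flag are resolved exactly as in Theorem~\ref{thm:many-anyon-Hardy}: the enclosed flux is a winding number and hence conformally invariant, and the paper applies the annuli directly in the $\zeta$-coordinate where the Hardy weight is literally $|\zeta|^{-2}$.
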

\begin{proof}
	\begin{figure}[t]
		\centering
		\psfrag{F}{$F$}
		\psfrag{0}{$0$}
		\psfrag{T_R}{$\bR$}
		\psfrag{T_OR}{$\Omega_{\bR}$}
		\psfrag{T_dR}{$\delta(\bR)$}
		\psfrag{T_gp}{$\gamma\pi$}
		\includegraphics{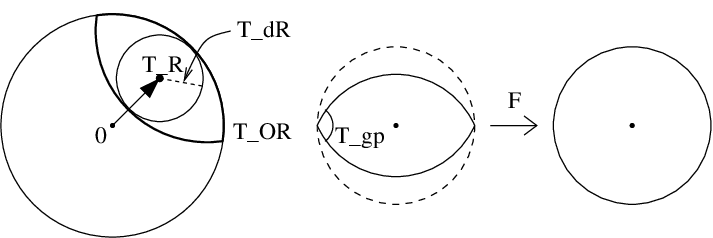}
		\caption{The set $\Omega_{\bR}$, which can be mapped 
			conformally to the unit disk by the map $F$.}
		\label{fig:eye}
	\end{figure}
	We proceed as in the proof of Theorem \ref{thm:many-anyon-Hardy},
	but this time parameterize not just $\Omega \circ \Omega$, but
	the whole set $\Omega^2$ by
	the relative coordinates $\bR$, $\br$.
	For $\bR$ at a distance $\lambda R$, $0 \le R \le 1$,
	from the center $0$ of the disk,
	the total range of allowed $\br$ forms an eye-shaped set $\Omega_{\bR}$ 
	given by the intersection of two disks (see Figure \ref{fig:eye}).
	After rescaling by $(1-R^2)^{-\frac{1}{2}}\lambda^{-1}$ 
	and choosing axes so that the corners of the eye are at the
	points $z=-1$ resp. $z=1$ in the complex plane, 
	we can map the resulting set 
	conformally onto the unit disk by the sequence of maps
	$$
		F: \quad
		z \mapsto \frac{1+z}{1-z} =: w,
		\quad
		w \mapsto w^{\frac{1}{\gamma}} =: \zeta,
		\quad
		\zeta \mapsto \frac{\zeta-1}{\zeta+1} =: F(z),
	$$
	where $\gamma\pi = \gamma_R \pi$ is the angle at the corner of the eye.
	The resulting map is hence
	\begin{equation} \label{conformal-map}
		F(z) = \frac{(1+z)^{\frac{1}{\gamma}} - (1-z)^{\frac{1}{\gamma}}}
					{(1+z)^{\frac{1}{\gamma}} + (1-z)^{\frac{1}{\gamma}}}
			=: \xi + i\eta,
	\end{equation}
	which is \emph{antisymmetric} w.r.t. the antipodal map $z \mapsto -z$.
	Now, under the coordinate transformation $F$, the integral w.r.t. $\br$ in 
	\eqref{many-anyon-integral-split} becomes
	\begin{multline*}
		\int_{\Omega_{\bR}} |(D_j - D_k)u(\br)|^2 \,d\br 
		= \int_{B_1(0)} |(D_{F(\tilde{\br})}v(\xi,\eta)|^2 \,d\xi d\eta \\
		\ge C_{\alpha,N}^2 \int_{B_1(0)} \frac{|v|^2}{\xi^2 + \eta^2} \,d\xi d\eta
		= \frac{C_{\alpha,N}^2}{(1-R^2)\lambda^2} 
			\int_{\Omega_{\bR}} \frac{|F'(z)|^2}{|F(z)|^2} |u|^2 \,d\br,
	\end{multline*}
	where we again applied an annular decomposition of $B_1(0)$
	and Lemma \ref{lem:magnetic-Hardy},
	and used the antipodal symmetry of 
	$v(\xi,\eta) := u(F^{-1}(\xi+i\eta))$.
	Finally, we have by the definition of $f$ that 
	$|F'(z)|^2/|F(z)|^2 = (1-R^2) f(\bR,\tilde{\br})$.
\end{proof}

	Using the above Hardy potential $f$ in \eqref{Neumann-operator},
	together with more precise estimates of the lowest eigenvalue
	of the corresponding operator $H$,
	and the restriction that the eigenfunction should be antipodal-symmetric,
	would almost certainly produce a significantly 
	better bound on the constant $c_\Omega$
	and hence on the energy of 
	a gas of anyons.

\section*{Appendix B: A local Lieb-Thirring inequality with Neumann boundary conditions}

	In this appendix we derive certain 
	bosonic and fermionic kinetic energy inequalities
	on domains with Neumann boundary conditions.
	These 
	follow straightforwardly from a recent method
	due to Rumin \cite{Rumin:10}.

\begin{thm}
	\label{thm:single-particle-NLT}
	Let $Q$ be a cube in $\R^d$ with volume $|Q|$.
	Then
	\begin{equation} \label{box-LT}
		\sum_{j=1}^N \|\nabla \phi_j\|^2_{L^2(Q)} 
		\ \ge \ C_d' \, ({\textstyle \int_Q \rho})^{-\frac{2}{d}} \int_Q \left[
			\rho(\bx)^{\frac{1}{2}} - \left( \frac{\int_Q \rho}{|Q|} \right)^{\frac{1}{2}}
			\right]_+^{\frac{2(d+2)}{d}} d\bx,
	\end{equation}
	where $\phi_j \in H^1(Q)$ 
	and $\rho(\bx) := \sum_{j=1}^N |\phi_j(\bx)|^2$.
	
	Moreover, if $\{\phi_j\}$ are orthonormal in $L^2(Q)$, then
	\begin{equation} \label{box-LT-ON}
		\sum_{j=1}^N \|\nabla \phi_j\|^2_{L^2(Q)} 
		\ \ge \ C_d' \int_Q \left[
			\rho(\bx)^{\frac{1}{2}} - |Q|^{-\frac{1}{2}}
			\right]_+^{\frac{2(d+2)}{d}} d\bx.
	\end{equation}
\end{thm}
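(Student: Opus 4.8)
The plan is to follow Rumin's spectral approach \cite{Rumin:10}, built around the Neumann Laplacian $-\Delta^{\mathcal{N}}_Q$ on $Q$ and its spectral projections. First I would encode the data as the nonnegative operator $\gamma := \sum_{j=1}^N |\phi_j\rangle\langle\phi_j|$, whose diagonal is the density, $\gamma(x,x)=\rho(x)$, and whose kinetic energy is $\tr\big((-\Delta^{\mathcal{N}}_Q)\gamma\big)=\sum_j\|\nabla\phi_j\|_{L^2(Q)}^2$ for $\phi_j$ in the form domain $H^1(Q)$. Writing $Q_E:=\chi_{(E,\infty)}(-\Delta^{\mathcal{N}}_Q)$ for the high-energy projection and $P_E:=1-Q_E$, the layer-cake identity
\[
	\sum_{j=1}^N\|\nabla\phi_j\|_{L^2(Q)}^2
	=\int_0^\infty \tr(Q_E\gamma)\,dE,
	\qquad
	\tr(Q_E\gamma)=\int_Q\sum_{j}|(Q_E\phi_j)(x)|^2\,d\bx,
\]
reduces the theorem to a pointwise-in-$(\bx,E)$ lower bound on the high-energy density.

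The key local input is a uniform bound on the on-diagonal spectral density of the Neumann Laplacian. Using the explicit product-of-cosines eigenfunctions $\psi_k$ on the cube I would establish
\[
	\rho_E(\bx):=\sum_{k:\,\lambda_k\le E}|\psi_k(\bx)|^2
	\ \le\ \frac{1}{|Q|}+L_d\,E^{d/2},
	\qquad \bx\in Q,\ E\ge 0,
\]
where the first term is the contribution of the constant zero-mode forced by the Neumann condition and $L_d$ comes from the Weyl counting of eigenvalues $\lambda_k\le E$. Splitting $\phi_j=P_E\phi_j+Q_E\phi_j$ and applying Minkowski's inequality in $\ell^2(j)$ together with $\sum_j|(P_E\phi_j)(\bx)|^2=\langle P_E\delta_{\bx},\gamma\,P_E\delta_{\bx}\rangle\le\|\gamma\|\,\rho_E(\bx)$ (here $P_E\delta_{\bx}$ is the smooth kernel section, with $\|P_E\delta_{\bx}\|^2=\rho_E(\bx)$), I obtain the pointwise estimate
\[
	\Big(\sum_j|(Q_E\phi_j)(\bx)|^2\Big)^{1/2}
	\ \ge\ \rho(\bx)^{1/2}-\big(\|\gamma\|\,\rho_E(\bx)\big)^{1/2}.
\]
In the orthonormal case $\gamma$ is an orthogonal projection, so $\|\gamma\|=1$ and, by subadditivity of the square root, $\big(\|\gamma\|\rho_E\big)^{1/2}\le|Q|^{-1/2}+L_d^{1/2}E^{d/4}$; in the general case $\|\gamma\|\le\tr\gamma=\int_Q\rho=:M$, giving $\big(\|\gamma\|\rho_E\big)^{1/2}\le(M/|Q|)^{1/2}+(ML_d)^{1/2}E^{d/4}$.

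Inserting these into the layer-cake identity and using Tonelli to exchange the $\bx$- and $E$-integrations, the problem collapses to the elementary scalar integral
\[
	\int_0^\infty\big[a-b\,E^{d/4}\big]_+^2\,dE
	= c_d\,a_+^{\,2+4/d}\,b^{-4/d},
	\qquad c_d:=\int_0^1(1-t^{d/4})^2\,dt,
\]
evaluated with $a=\rho(\bx)^{1/2}-|Q|^{-1/2}$, $b=L_d^{1/2}$ in the orthonormal case, and with $a=\rho(\bx)^{1/2}-(M/|Q|)^{1/2}$, $b=(ML_d)^{1/2}$ in the general case. Since $2+4/d=\tfrac{2(d+2)}{d}$, the first choice yields \eqref{box-LT-ON} with $C_d'=c_dL_d^{-2/d}$, while the second produces the extra factor $b^{-4/d}=(ML_d)^{-2/d}$, hence the prefactor $C_d'(\int_Q\rho)^{-2/d}$ of \eqref{box-LT} with the same $C_d'$. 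I expect the only genuinely delicate step to be the uniform spectral-density bound for $\rho_E$: one must control the counting function $\#\{k:\lambda_k\le E\}$ and $\sup|\psi_k|^2$ simultaneously for all $\bx$ and all $E$, carefully isolating the constant mode (which accounts for the $1/|Q|$ term and hence for the shift $|Q|^{-1/2}$, resp. $(M/|Q|)^{1/2}$, inside the bracket) from the oscillating modes. The remaining ingredients — the spectral identity, Minkowski's inequality, the operator-norm bound on $\gamma$, and the scalar integral — are all routine.
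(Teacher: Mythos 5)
Your proposal is correct and follows essentially the same route as the paper's proof, namely Rumin's method: the layer-cake identity over Neumann spectral projections, the uniform on-diagonal spectral density bound $\rho_E(\bx)\le |Q|^{-1}+L_d E^{d/2}$ isolating the constant mode, the $\ell^2$-triangle inequality, and the same scalar $E$-integral (your $c_d=\int_0^1(1-t^{d/4})^2\,dt=\tfrac{d^2}{(d+2)(d+4)}$ matches the paper's constant). The only cosmetic difference is that you phrase the low-energy bound through $\|\gamma\|$ with $\|\gamma\|\le\tr\gamma=\int_Q\rho$ (resp. $\|\gamma\|=1$), where the paper applies Cauchy--Schwarz to each $\phi_j$ (resp. Bessel's inequality); these give identical estimates.
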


\begin{proof}
	For any $e \ge 0$ and $\phi \in L^2(Q)$, let
	$$
		\phi = \phi^{e,+} + \phi^{e,-},
		\quad
		\phi^{e,+} := P_{\{-\Delta^{\mathcal{N}}_Q \ge e\}} \phi,
		\quad
		\phi^{e,-} := P_{\{-\Delta^{\mathcal{N}}_Q < e\}} \phi,
	$$
	and note that for $\phi \in H^1(Q)$
	(interpreted in terms of quadratic forms) 
	$$
		\int_0^\infty \| \phi^{e,+} \|_{L^2(Q)}^2 \,de
		= \langle \phi, \int_0^\infty \underbrace{P_{\{-\Delta^{\mathcal{N}}_Q \ge e\}}}_{=\int_{\R} 1_{\{\lambda \ge e\}} dP(\lambda) } de \,\phi \rangle
		= \langle \phi, \underbrace{-\Delta^{\mathcal{N}}_Q}_{=\int_{\R} \lambda \,dP(\lambda)} \phi \rangle.
	$$
	Denote the eigenvalues and orthonormal eigenfunctions 
	of $-\Delta^{\mathcal{N}}_Q$ 
	by $\{\lambda_k\}_{k=0}^\infty$ resp. $\{u_k\}_{k=0}^\infty$.
	Then for each $\bx \in Q$,
	\begin{multline} \label{LT-eigenvalue-estimate}
		|\phi^{e,-}(\bx)|^2 
		= \left| \left(P_{\{-\Delta^{\mathcal{N}}_Q < e\}} \phi\right)(\bx) \right|^2
		= \left| \sum_{\lambda_k < e} \langle u_k, \phi \rangle u_k(\bx) \right|^2 \\
		= \left| \left\langle \sum_{\lambda_k < e} \overline{u_k(\bx)} u_k, \phi \right\rangle \right|^2
		\le \left( \sum_{\lambda_k < e} |u_k(\bx)|^2 \right) \|\phi\|^2 \\
		\le \left( \frac{1}{|Q|} + \sum_{0 < \lambda_k < e} \frac{2^d}{|Q|} \right) \|\phi\|^2,
	\end{multline}
	and hence $|\phi^{e,-}(\bx)|^2 \le (|Q|^{-1} + C_d e^{\frac{d}{2}}) \|\phi\|^2$
	by the well-known asymptotics for the Neumann eigenvalues on $Q$.
	
	Now, by the triangle inequality in $\C^N$ we have for arbitrary 
	$\{\phi_j\}_{j=1}^N \subseteq L^2(Q)$
	$$
		\left( \sum_{j=1}^N |\phi_j^{e,+}(\bx)|^2 \right)^{\frac{1}{2}}
		\ge \left[ 
			\left( \sum_{j=1}^N |\phi_j(\bx)|^2 \right)^{\frac{1}{2}} -
			\left( \sum_{j=1}^N |\phi_j^{e,-}(\bx)|^2 \right)^{\frac{1}{2}}
		\right]_+ \!,
		\quad \bx \in Q,
	$$
	and hence we find that for $\rho(\bx) := \sum_j |\phi_j(\bx)|^2$
	and $\{\phi_j\}_{j=1}^N \subseteq H^1(Q)$ 
	\begin{multline} \label{LT-integral}
		\sum_{j=1}^N \|\nabla \phi_j\|^2 
		= \sum_j \int_Q \int_0^\infty |\phi_j^{e,+}(\bx)|^2 \,de \,d\bx \\
		\ge \int_Q \int_0^\infty \left[ 
			\rho(\bx)^{\frac{1}{2}} - \left( (|Q|^{-1} 
				+ C_d e^{\frac{d}{2}}) \int_Q \rho(\by) \,d\by \right)^{\frac{1}{2}}
			\right]_+^2 de \,d\bx \\
		\ge C_d \int_Q \rho(\by) \,d\by \int_Q \int_0^\infty \left[ 
			\left( \frac{\rho(\bx)}{C_d \int_Q \rho} \right)^{\frac{1}{2}}
			- \left( \frac{1}{C_d |Q|} \right)^{\frac{1}{2}} - e^{\frac{d}{4}}
			\right]_+^2 de \,d\bx \\
		= \frac{d^2 C_d \int_Q \rho}{(d+2)(d+4)} \int_Q \left[ 
			\left( \frac{\rho(\bx)}{C_d \int_Q \rho} \right)^{\frac{1}{2}}
			- \left( \frac{1}{C_d |Q|} \right)^{\frac{1}{2}}
			\right]_+^{\frac{2(d+2)}{d}} d\bx \\
		= C_d'\, ({\textstyle \int_Q \rho})^{1-\frac{d+2}{d}} \int_Q \left[
			\rho(\bx)^{\frac{1}{2}} - \left( \frac{\int_Q \rho}{|Q|} \right)^{\frac{1}{2}}
			\right]_+^{\frac{2(d+2)}{d}} d\bx,
	\end{multline}
	with $C_d' := d^2 C_d^{-\frac{2}{d}} \big/ (d+2)(d+4)$.
	
	In the case that $\{\phi_j\}$ are orthonormal, 
	Bessel's inequality applies in \eqref{LT-eigenvalue-estimate}:
	$$
		\sum_j \left| \left\langle \sum_{\lambda_k < e} \overline{u_k(\bx)} u_k, \phi_j \right\rangle \right|^2
		\le \left\| \sum_{\lambda_k < e} \overline{u_k(\bx)} u_k \right\|^2
		= \sum_{\lambda_k < e} |u_k(\bx)|^2,
	$$
	which then replaces every occurance of 
	$\int_Q \rho$ in \eqref{LT-integral} by $1$.
\end{proof}

\begin{rem}[Generalization]
	If we know how 
	$\Xi_\Omega(e,\bx) := \sum_{\lambda_k < e} |u_k(\bx)|^2$
	behaves for a given domain $\Omega \subseteq \R^d$, 
	then we can evaluate
	\begin{equation} \label{generalized-LT}
		\sum_{j=1}^N \langle \phi_j, -\Delta_\Omega \phi_j \rangle
		\ge \int_\Omega \int_0^\infty \left[ 
			\rho(\bx)^{\frac{1}{2}} - \left( \Xi_\Omega(e,\bx) \int_\Omega \rho \right)^{\frac{1}{2}}
			\right]_+^2 de \,d\bx \\
	\end{equation}
	for either the Neumann or Dirichlet Laplacian on $\Omega$.
	Again, $\int_\Omega \rho$ in the r.h.s. is replaced by $1$ 
	in the case that $\{\phi_j\}$ are orthonormal.
\end{rem}

\begin{thm}[Many-particle version] \label{thm:many-particle-NLT}
	Let $Q$ be a cube in $\R^d$ with volume $|Q|$,
	and let $u \in H^1(\R^{dN})$ be an $N$-particle wave function.
	Then
	\begin{equation} \label{box-LT-many}
		\sum_{j=1}^N \int_{\R^{dN}} |\nabla_j u|^2 \chi_Q(\bx_j) \,dx
		\ \ge \ C_d' ({\textstyle \int_Q \rho})^{-\frac{2}{d}} \int_Q \left[
			\rho(\bx)^{\frac{1}{2}} - \left( \frac{\int_Q \rho}{|Q|} \right)^{\frac{1}{2}}
			\right]_+^{\frac{2(d+2)}{d}} \! d\bx,
	\end{equation}
	where $\rho(\bx) := \sum_{j=1}^N \int_{\R^{d(N-1)}} 
		|u(\bx_1,\ldots,\bx_{j-1},\bx,\bx_{j+1},\ldots,\bx_N)|^2 \prod_{k \neq j} d\bx_k$.
\end{thm}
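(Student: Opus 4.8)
The plan is to reduce the many-particle inequality \eqref{box-LT-many} to the single-particle inequality \eqref{box-LT} of Theorem \ref{thm:single-particle-NLT}, via the localized one-particle density matrix. Writing $x' := (\bx_k)_{k\neq j}$ for the remaining variables, I would introduce the kernel on $L^2(Q)$,
\[
  \Gamma(\bx,\by) := \sum_{j=1}^N \int_{\R^{d(N-1)}}
    u(\ldots,\bx_j=\bx,\ldots)\,\overline{u(\ldots,\bx_j=\by,\ldots)}\,
    \prod_{k\neq j} d\bx_k, \qquad \bx,\by\in Q .
\]
Each summand equals $\int |g^{(j)}_{x'}\rangle\langle g^{(j)}_{x'}|\,dx'$ for the slice $g^{(j)}_{x'}(\bx):=u(\ldots,\bx_j=\bx,\ldots)$, hence is positive semi-definite; thus $\Gamma$ is a positive operator on $L^2(Q)$ with diagonal $\Gamma(\bx,\bx)=\rho(\bx)$ and $\tr\Gamma=\int_Q\rho<\infty$. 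The key identity I would establish is
\[
  T_Q := \sum_{j=1}^N \int_{\R^{dN}} |\nabla_j u|^2\,\chi_Q(\bx_j)\,dx
  = \tr_{L^2(Q)}\big[(-\Delta^{\mathcal N}_Q)\,\Gamma\big].
\]
This holds because, for a.e.\ configuration $x'$, the slice $g^{(j)}_{x'}$ restricts to an element of $H^1(Q)$ and $\int_Q|\nabla_j u|^2\,d\bx_j$ is precisely its Neumann quadratic form $\langle g^{(j)}_{x'},(-\Delta^{\mathcal N}_Q)g^{(j)}_{x'}\rangle$; expanding in the eigenbasis $\{u_m\}$ of $-\Delta^{\mathcal N}_Q$, integrating over $x'$ and summing over $j$ collapses the expression to $\sum_m\lambda_m\langle u_m,\Gamma u_m\rangle$.

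Next I would diagonalize $\Gamma=\sum_k n_k\,|\psi_k\rangle\langle\psi_k|$ with $\{\psi_k\}$ orthonormal in $L^2(Q)$ and $n_k\ge0$, and set $\phi_k:=\sqrt{n_k}\,\psi_k$. Finiteness of $T_Q=\tr[(-\Delta^{\mathcal N}_Q)\Gamma]$ forces $\psi_k\in H^1(Q)$ whenever $n_k>0$, so each $\phi_k\in H^1(Q)$, and
\[
  \sum_k \|\nabla\phi_k\|_{L^2(Q)}^2 = \tr\big[(-\Delta^{\mathcal N}_Q)\Gamma\big] = T_Q,
  \qquad
  \sum_k |\phi_k(\bx)|^2 = \Gamma(\bx,\bx) = \rho(\bx) .
\]
Applying the single-particle bound \eqref{box-LT} to the family $\{\phi_k\}$ (for which $\sum_k|\phi_k|^2=\rho$ and $\int_Q\sum_k|\phi_k|^2=\int_Q\rho$) turns its right-hand side into exactly the right-hand side of \eqref{box-LT-many}, completing the proof.

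The family $\{\phi_k\}$ is in general countably infinite, whereas Theorem \ref{thm:single-particle-NLT} is stated for finitely many functions, so the main point to check — and the principal, though mild, obstacle — is that the single-particle estimate survives the passage to a countable family. Revisiting its proof, the pointwise bound \eqref{LT-eigenvalue-estimate} uses only a single $\phi$ at a time, and the subsequent reverse triangle inequality in $\C^N$ together with $\sum_k|\phi_k^{e,-}(\bx)|^2\le(|Q|^{-1}+C_de^{d/2})\int_Q\rho$ hold verbatim for a countable index, the only requirement being $\rho=\sum_k|\phi_k|^2\in L^1(Q)$ and $\sum_k\|\nabla\phi_k\|^2<\infty$, both of which we have. (Alternatively one truncates to $\{\phi_k\}_{k\le M}$ and lets $M\to\infty$, the left sides increasing to $T_Q$ and the right sides converging by dominated convergence.) A secondary item to verify is the trace identity $T_Q=\tr[(-\Delta^{\mathcal N}_Q)\Gamma]$, specifically that restriction of each slice to $Q$ yields the Neumann rather than the Dirichlet form; this is automatic, since the unconstrained form $\phi\mapsto\int_Q|\nabla\phi|^2$ on $H^1(Q)$ is by definition the quadratic form of $-\Delta^{\mathcal N}_Q$.
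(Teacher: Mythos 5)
Your proof is correct, but it takes a genuinely different route from the paper's. You reduce \eqref{box-LT-many} to the single-particle Theorem \ref{thm:single-particle-NLT} by introducing the $Q$-localized one-particle density matrix $\Gamma$, verifying $T_Q=\tr[(-\Delta^{\mathcal N}_Q)\Gamma]$, diagonalizing $\Gamma$ into natural orbitals $\phi_k=\sqrt{n_k}\,\psi_k$, and then applying the single-particle bound to this countable family --- correctly flagging and handling the passage from finitely many to countably many functions. The paper instead never forms $\Gamma$: it re-runs the Rumin argument directly on the \emph{continuum} family of slices $\phi_j(\cdot,x')$ indexed by $(j,x')\in\{1,\ldots,N\}\times\R^{d(N-1)}$, replacing the reverse triangle inequality in $\C^N$ by the one in $L^2(\R^{d(N-1)};\C^N)$ and otherwise quoting \eqref{LT-eigenvalue-estimate} and \eqref{LT-integral} verbatim. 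The paper's route buys self-containedness and sidesteps the trace-class technicalities your argument incurs (the a.e.\ identification of the kernel diagonal $\Gamma(\bx,\bx)$ with $\rho$, which is cleanest via testing $\tr[V\Gamma]$ against $V\in L^\infty(Q)$; the interpretation of the trace of an unbounded operator against $\Gamma$ via Tonelli in the Neumann eigenbasis; and the countable-family extension). Your route buys modularity --- the single-particle theorem is used as a black box --- and makes transparent that the only input from the many-body wave function is its reduced density matrix. One small caution: in your fallback truncation argument, dominated convergence on the right-hand side requires an integrable dominant that is not known a priori; Fatou's lemma (applied to the $[\,\cdot\,]_+$ integral, combined with convergence of the prefactor $(\int_Q\rho_M)^{-2/d}$) gives the needed lower semicontinuity without it. Since your primary argument --- that the proof of Theorem \ref{thm:single-particle-NLT} goes through verbatim for a countable family with $\sum_k|\phi_k|^2\in L^1(Q)$ --- is sound, this does not affect the validity of the proof.
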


\begin{proof}
	We define for each 
	$x' = (\bx_1,\ldots,\bx_{j-1},\bx_{j+1},\ldots,\bx_N) \in \R^{d(N-1)}$ 
	a collection of functions
	$$
		\bx \ \mapsto \ 
		\phi_j(\bx,x') := u(\bx_1,\ldots,\bx_{j-1},\bx,\bx_{j+1},\ldots,\bx_N)
	$$
	in $L^2(Q)$,
	and proceed as in the proof of Theorem \ref{thm:single-particle-NLT}, 
	writing 
	$$
		\int_Q |\nabla_{\bx} \phi_j(\bx,x') |^2 \,d\bx 
		= \int_0^\infty \int_Q |\phi_j^{e,+}(\bx,x')|^2 \,d\bx \,de,
	$$
	for each $x' \in \R^{d(N-1)}$,
	and similarly to \eqref{LT-eigenvalue-estimate}
	\begin{multline*}
		\sum_{j=1}^N \int_{\R^{d(N-1)}} |\phi_j^{e,-}(\bx,x')|^2 \,dx' \\
		\le \sum_{j=1}^N \int_{\R^{d(N-1)}} \left( |Q|^{-1} + C_d e^{\frac{d}{2}} \right) 
			\int_Q |\phi_j(\by,x')|^2 \,d\by \,dx' \\
		= \left( |Q|^{-1} + C_d e^{\frac{d}{2}} \right) \int_Q \rho,
	\end{multline*}
	as well as 
	using the triangle inequality on $L^2(\R^{d(N-1)};\C^N)$,
	\begin{multline*}
		\left( \int_{\R^{d(N-1)}} \sum_{j=1}^N |\phi_j^{e,+}(\bx,x')|^2 \,dx' \right)^{\frac{1}{2}}
		\ge \left[ 
			\left( \int_{\R^{d(N-1)}} \sum_{j=1}^N |\phi_j(\bx,x')|^2 \,dx' \right)^{\frac{1}{2}} \right. \\
		\left. -
			\left( \int_{\R^{d(N-1)}} \sum_{j=1}^N |\phi_j^{e,-}(\bx,x')|^2 \,dx' \right)^{\frac{1}{2}}
		\right]_+,
	\end{multline*}
	for $\bx \in Q$. Hence,
	\begin{multline*}
		\sum_{j=1}^N \int_{\R^{d(N-1)}} \int_Q |\nabla_{\bx} \phi_j(\bx,x') |^2 \,d\bx \,dx' \\
		\ge \int_Q \int_0^\infty \left[ 
			\rho(\bx)^{\frac{1}{2}} - \left( (|Q|^{-1} + C_d e^{\frac{d}{2}}) \int_Q \rho \right)^{\frac{1}{2}}
			\right]_+^2 de \,d\bx,
	\end{multline*}
	and \eqref{box-LT-many} then follows as in \eqref{LT-integral}.
\end{proof}

\end{document}